\newtheorem{prop}{Proposition}[section]
\newtheorem{rem}[prop]{Remark}
\newtheorem{lem}[prop]{Lemma}
\newtheorem{theo}[prop]{Theorem}
\newtheorem{cor}[prop]{Corollary}
\newcommand{\beq}{\begin{eqnarray}}
\newcommand{\beqq}{\begin{eqnarray*}}
\newcommand{\eeq}{\end{eqnarray}}
\newcommand{\eeqq}{\end{eqnarray*}}
\def\QED{\quad\hbox{\hskip 4pt\vrule width 5pt height 6pt depth 1.5pt}}
\title{Windings of planar stable processes}
\author{ R.A. Doney \thanks{Probability and Statistics Group, School of Mathematics, University of Manchester,
Alan Turing Building, Oxford Road, Manchester M13 9PL, United Kingdom. E-mail: ron.doney@manchester.ac.uk} \ and \
S. Vakeroudis $^{\ast}$\thanks{Laboratoire de Probabilit\'{e}s et Mod\`{e}les
Al\'{e}atoires (LPMA) CNRS : UMR7599,  Universit\'{e} Pierre et Marie
Curie - Paris VI,  Universit\'{e} Paris-Diderot - Paris VII, 4 Place Jussieu, 75252 Paris Cedex 05, France.
E-mail: stavros.vakeroudis@ulb.ac.be } }
\date{\today}
\begin{document}

\maketitle
\begin{abstract}
Using a generalization of the skew-product representation of planar Brownian motion
and the analogue of Spitzer's celebrated asymptotic Theorem
for stable processes due to Bertoin and Werner, for which we provide a new easy proof,
we obtain some limit Theorems for the exit time from a cone of stable processes
of index $\alpha\in(0,2)$. We also study the case $t\rightarrow0$
and we prove some Laws of the Iterated Logarithm (LIL)
for the (well-defined) winding process associated to our planar stable process.
\end{abstract}

$\vspace{5pt}$
\\
\textbf{AMS 2010 subject classification:} Primary: 60G52, 60G51, 60F05, 60J65; \\
secondary: 60E07, 60B12, 60G18.

$\vspace{5pt}$
\\
\textbf{Key words:} Stable processes, L\'{e}vy processes, Brownian motion, windings, exit time from a cone, Spitzer's Theorem, skew-product representation,
Lamperti's relation, Law of the Iterated Logarithm (LIL) for small times.

\section{Introduction}
\renewcommand{\thefootnote}{\fnsymbol{footnote}}
In this paper, we study the windings of planar isotropic stable processes.
More precisely, having as a starting point a work of Bertoin and Werner \cite{BeW96} concerning this subject
(following their previous work on windings of planar Brownian motion\footnote[4]{When we simply write: Brownian motion, we always mean real-valued Brownian motion, starting from 0. For 2-dimensional Brownian motion, we indicate planar or complex BM.} \cite{BeW94})
and motivated by some works of Shi \cite{Shi98}, we attempt to generalize some results obtained recently for the case
of planar Brownian motion (see e.g. \cite{Vak11,Vakth11,VaY11a} and the references therein).
In particular, we are interested in the behaviour of stable processes for small time,
an aspect which has already been investigated e.g. by Doney \cite{Don04} in terms
of Spitzer's condition for stable processes (see e.g. \cite{BeD97} and the references therein).

In Section \ref{secpre}, we recall some facts about standard isotropic
stable processes of index $\alpha\in(0,2)$ taking values in the complex plane.
Then, we follow Bertoin and Werner \cite{BeW96} to define the process of its winding number, we
generalize the skew-product representation of planar BM (see e.g. \cite{Kiu80,ReY99,Chy06}) and
we present two Lemmas for the winding process of isotropic stable L\'{e}vy processes obtained in \cite{BeW96}.
Finally, we mention some properties of the positive and the negative moments of the exit times from a cone of this process.

In Section \ref{larget}, we use some continuity arguments of the composition function due to Whitt \cite{Whi80} and we obtain a new simple proof
of the analogue of Spitzer's Theorem for isotropic stable L\'{e}vy processes,
initially proven by Bertoin and Werner \cite{BeW96}. We reformulate and we extend this result in terms of the exit times from a cone.
More precisely, Spitzer's asymptotic Theorem says that, if $(\vartheta_{t},t\geq0)$
denotes the continuous determination of the argument of a planar BM starting away from the origin, then:
\beq\label{Spi}
 \frac{2}{\log t} \; \vartheta_{t} \overset{{(law)}}{\underset{t\rightarrow\infty}\longrightarrow} C_{1} \ ,
\eeq
where $C_{1}$ is a standard Cauchy variable. For other proofs of (\ref{Spi}) , see e.g.
\cite{Wil74,Dur82,MeY82,PiY86,BeW94,Yor97,Vak11,VaY11a}.
Bertoin and Werner state that because an isotropic stable L\'{e}vy processes is transient,
we expect that it winds more slowly than planar Brownian motion and prove that, with $\theta$ now denoting the
process of its winding number (appropriately defined, see e.g. Section \ref{secpre}),
$\theta_{t}/\sqrt{\log t}$ converges in
distribution to some centered Gaussian law as $t\rightarrow\infty$
(Theorem 1 in Bertoin and Werner \cite{BeW96}, stated here as Theorem \ref{SpiBW}).

In Section \ref{smallt}, and more precisely in Theorems \ref{DV} and \ref{DV2},
we study the asymptotics of a symmetric L\'{e}vy process and of the winding process of isotropic stable
L\'{e}vy processes for $t\rightarrow0$, respectively, which are the main results of this article.
In particular, we show that $t^{-1/\alpha}\theta_{t}$ converges in
distribution to an $\alpha$-stable law as $t\rightarrow0$.
Using this result, in Proposition \ref{proptsmall} we obtain the (weak) limit in distribution of the process of the exit times
from a cone with narrow amplitude and we further obtain several generalizations.
We also study the windings of planar stable processes in $\left(\right.t,1\left.\right]$, for $t\rightarrow0$ and
we note that, with obvious notation, Spitzer's law is still valid for $\theta_{\left(\right.t,1\left.\right]}$ (see Remark \ref{remt1}).
Section \ref{secLIL} deals with the Law of the Iterated Logarithm LIL for L\'{e}vy processes for small times,
in the spirit of some well-known (LIL) for Brownian motion for $t\rightarrow\infty$
from Bertoin and Werner \cite{BeW94,BeW94b} and from Shi \cite{Shi94,Shi98}, and for stable subordinators
with index $\alpha\in(0,1)$ for $t\rightarrow0$ from Fristedt \cite{Fri64,Fri67} and Khintchine \cite{Khi39}
(see also \cite{Ber96}). Moreover, we prove a LIL for the winding number process of stable processes,
for $t\rightarrow0$.

Finally, in Section \ref{secBM} we discuss the planar Brownian motion case and in Theorem \ref{DV3} we obtain
the asymptotic behaviour of the winding process as $t\rightarrow0$. More precisely,
the process $\left(c^{-1/2}\vartheta_{ct},t\geq0\right)$ converges in law to a 1-dimensional Brownian motion
as $c\rightarrow0$.
\\ \\
\noindent\textbf{Notation:} In the following text, with the symbol "$\Longrightarrow$"
we shall denote the weak convergence in distribution on the appropriate space, endowed with
the Skorohod topology.

\section{Preliminaries}\label{secpre}
Following Lamperti \cite{Lam72}, a Markov process $X$ with values in $\mathbb{R}^{d}$, $d\geq2$
is called isotropic or $O(d)$-invariant ($O(d)$ stands for the group of orthogonal transformations
on $\mathbb{R}^{d}$) if its transition satisfies:
\beq
P_{t}(\phi(x),\phi(\mathcal{B}))=P_{t}(x,\mathcal{B}),
\eeq
for any $\phi\in O(d)$, $x\in \mathbb{R}^{d}$ and Borel subset $\mathcal{B}\subset\mathbb{R}^{d}$. \\
Moreover, $X$ is said to be $\alpha$-self-similar if, for $\alpha>0$,
\beq\label{ass}
P_{\lambda t}(x,\mathcal{B})=P_{t}(\lambda^{-\alpha}x,\lambda^{-\alpha}\mathcal{B}),
\eeq
for any $\lambda>0$, $x\in \mathbb{R}^{d}$ and $\mathcal{B}\subset\mathbb{R}^{d}$.

We focus now our study on the 2-dimensional case $(d=2)$, where (\ref{ass}) holds, and we denote by
$(Z_{t},t\geq0)$ a standard isotropic stable process of index $\alpha\in(0,2)$ taking values in the complex plane
and starting from $z_{0}+i0,z_{0}>0$.
A scaling argument shows that we may assume $z_{0}=1$,
without loss of generality, since, with obvious notation:
\beq
\left(Z^{(z_{0})}_{t},t\geq0\right)\stackrel{(law)}{=}\left(z_{0}Z^{(1)}_{(t/z^{\alpha}_{0})},t\geq0\right).
\eeq
Thus, from now on, we shall take $z_{0}=1$.
More precisely, $Z$ has stationary independent increments,
its sample path is right continuous and has left limits (cadlag) and, with $\langle\cdot,\cdot\rangle$ standing for the Euclidean inner product, $E\left[\exp\left(i\langle\lambda,Z_{t}\rangle\right)\right]=\exp\left(-t |\lambda|^{\alpha}\right)$, for all $t\geq0$ and $\lambda\in\mathbb{C}$.
$Z$ is transient, $\lim_{t\rightarrow\infty} |Z_{t}|=\infty$ a.s.
and it a.s. never visits single points.
We remark that for $\alpha=2$, we are in the Brownian motion case.

We are now going to recall some properties of stable processes and L\'{e}vy processes
(for more details see e.g. \cite{Ber96} or \cite{Kyp06}). \\
To start with, if $\mathcal{Z}=(\mathcal{Z}_{t},t\geq0)$ denotes a planar Brownian motion starting from 1 and
$S=(S(t),t\geq0)$ an independent stable subordinator with index $\alpha/2$ starting from 0, i.e.:
\beq\label{subordinator}
E\left[\exp\left(-\mu S(t)\right)\right]=\exp\left(-t \mu^{\alpha/2}\right),
\eeq
for all $t\geq0$ and $\mu\geq0$, then the subordinated planar BM $(\mathcal{Z}_{2S(t)},t\geq0)$
is a standard isotropic stable process of index $\alpha$.
The L\'{e}vy measure of $S$ is:
$$ \frac{\alpha}{2\Gamma(1-\alpha/2)} s^{-1-\alpha/2} 1_{\{s>0\}} ds $$
thus, the L\'{e}vy measure $\nu$ of $Z$ is:
\beq
\nu(dx)&=& \frac{\alpha}{2\Gamma(1-\alpha/2)} \int^{\infty}_{0} s^{-1-\alpha/2} P\left(\mathcal{Z}_{2s}-1 \ \in \ dx\right) ds \nonumber \\
&=& \frac{\alpha}{8\pi\Gamma(1-\alpha/2)} \left(\int^{\infty}_{0} s^{-2-\alpha/2} \exp\left(-|x|^{2}/(4s)\right) \ ds\right) dx \nonumber \\
&=&\frac{\alpha \ 2^{-1+\alpha/2} \Gamma(1+\alpha/2)}{\pi\Gamma(1-\alpha/2)} |x|^{-2-\alpha} dx \ .
\eeq

Contrary to planar Brownian motion, as $Z$ is discontinuous, we cannot define its winding number
(recall that, as is well known \cite{ItMK65}, for planar BM, since
it starts away from the origin, it does not visit a.s. the point $0$
but keeps winding around it infinitely often. In particular, the
winding process is well defined, for further details see also e.g. \cite{PiY86}).
However, following \cite{BeW96}, we can consider a path on a finite time
interval $[0,t]$ and "fill in" the gaps with line segments in order to obtain the curve of a continuous function
$f:[0,1]\rightarrow\mathbb{C}$ with $f(0)=1$. Now, since 0 is polar and $Z$ has no jumps across 0 a.s., we have $f(u)\neq0$
for every $u\in[0,1]$. Hence, we can define the process of the winding number of $Z$ around 0, which
we denote by $\theta=(\theta_{t},t\geq0)$. It has cadlag paths of absolute length greater than $\pi$ and, for all $t\geq0$,
\beq
\exp(i\theta_{t})=\frac{Z_{t}}{|Z_{t}|} \ .
\eeq
We also introduce the clock:
\beq\label{basicclock}
H(t)\equiv\int^{t}_{0}\frac{ds}{\left|Z_{s}\right|^{\alpha}} \ ,
\eeq
and its inverse:
\beq
A(u)\equiv\inf\{t\geq0, H(t)>u\} \ .
\eeq
Bertoin and Werner following \cite{GVA86} obtained these two Lemmas for $\alpha\in(0,2)$ (for the proofs see \cite{BeW94}):
\begin{lem}\label{lemma1}
The time-changed process $(\theta_{A(u)},u\geq0)$ is a real-valued symmetric L\'{e}vy process.
It has no Gaussian component and its L\'{e}vy measure has support in $[-\pi,\pi]$.
\end{lem}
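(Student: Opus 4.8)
The plan is to read off the three assertions from the \emph{generalised skew-product}: the isotropy and the $1/\alpha$-self-similarity of $Z$ together force the dependence of the time-changed winding on the position $Z_{A(u)}$ — both its modulus and its argument — to cancel out, which is precisely what makes the increments stationary and independent. First I record that $H$ in (\ref{basicclock}) is a continuous, strictly increasing additive functional with $H(\infty)=\infty$ a.s.\ (strict monotonicity because $0$ is polar, so $|Z_s|\in(0,\infty)$ for all $s$; $H(\infty)=\infty$ by transience, since $|Z_s|^{-\alpha}$ decays like $s^{-1}$). Hence $A$ is its genuine continuous inverse and each $A(u)$ is an $\mathcal{F}$-stopping time with $H(A(u))=u$ and $A(u+v)=A(u)+\widetilde A(v)$, where $\widetilde A$ is the clock-inverse of the shifted process $\widetilde Z_s:=Z_{A(u)+s}$. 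By the strong Markov property, conditionally on $\mathcal{F}_{A(u)}$ the process $\widetilde Z$ is an isotropic stable process started from $w:=Z_{A(u)}=re^{i\phi}$; applying isotropy (rotation by $-\phi$) and then self-similarity, $(\widetilde Z_s,s\ge0)\overset{(law)}{=}(re^{i\phi}Z^{(1)}_{s/r^{\alpha}},s\ge0)$. Since the winding of the filled-in curve is invariant under rotation and under scaling of the path, the winding increment of $\widetilde Z$ equals, jointly with its clock, $(\theta^{(1)}_{s/r^{\alpha}},s\ge0)$ with clock $H^{(1)}(s/r^{\alpha})$; inverting the clock gives $\widetilde A(v)=r^{\alpha}A^{(1)}(v)$, whence $\theta_{A(u+v)}-\theta_{A(u)}\overset{(law)}{=}\theta^{(1)}_{A^{(1)}(v)}=\theta_{A(v)}$, and this is independent of $\mathcal{F}_{A(u)}$. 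The factors $r$ and $e^{i\phi}$ have disappeared — this is the only place self-similarity and isotropy are used — and together with the cadlag property (inherited from $\theta$ through the continuous change $A$) this shows $(\theta_{A(u)},u\ge0)$ is a real-valued Lévy process.

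For symmetry, note that $Z$ is invariant (as a process started from $1$) under the reflection $z\mapsto\bar z\in O(2)$, which leaves $|Z|$, hence $H$ and $A$, unchanged while reversing orientation, so it sends $\theta$ to $-\theta$; therefore $(\theta_{A(u)})\overset{(law)}{=}(-\theta_{A(u)})$. For the support of the Lévy measure: a continuous, strictly increasing time change preserves jump sizes, so the jumps of $\theta_{A(u)}$ are exactly the jumps of $\theta$. At a jump time $t$ of $Z$ the filled-in curve traverses the segment $[Z_{t^-},Z_t]$, and since $Z$ a.s.\ has no jump across $0$ this segment avoids the origin; the resulting increment of $\theta$ is the signed angle this segment subtends at $0$, which lies in $(-\pi,\pi)$. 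Hence every jump of $(\theta_{A(u)})$ lies in $(-\pi,\pi)$, i.e.\ its Lévy measure is carried by $[-\pi,\pi]$.

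Finally, for the absence of a Gaussian component: since $\alpha<2$, $Z$ has no Brownian part and no drift, so it moves only by jumps, and the winding of the filled-in path accumulates solely from the polygonal contributions at those jumps. Concretely, one bounds the contribution of the small jumps to the quadratic variation of $\theta$ on a compact time interval by $C\,(\inf_{s\le t}|Z_s|)^{-2}\sum_{s\le t,\,|\Delta Z_s|<\delta}|\Delta Z_s|^{2}$, using $|\Delta\theta_s|\le C|\Delta Z_s|/|Z_{s^-}|$ for small jumps; this tends to $0$ as $\delta\downarrow0$ because $\inf_{s\le t}|Z_s|>0$ a.s.\ and $\sum_{s\le t}|\Delta Z_s|^{2}<\infty$, so $\theta$ is a quadratic pure-jump process and consequently the Lévy process $(\theta_{A(u)})$, whose quadratic variation is then exhausted by its jumps, has vanishing Gaussian coefficient. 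I expect this last point to be the main obstacle: making rigorous both the definition of the winding of the densely jumping filled-in path and the estimate controlling its small jumps, i.e.\ checking that the accumulation of the infinitely many small jumps of $Z$ creates no Brownian component in $\theta$ — this is the substance of the cited results of Graversen--Vuolle-Apiala and Bertoin--Werner, whereas Steps 1 and 2 above are essentially formal consequences of the self-similar Markov structure.
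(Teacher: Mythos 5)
There is no proof of Lemma \ref{lemma1} in this paper to compare you against: the lemma is quoted from Bertoin and Werner \cite{BeW96} (who follow Graversen and Vuolle-Apiala \cite{GVA86}), and the paper refers to those works for the proof. Judged on its own, your reconstruction is the standard argument behind the cited result: $H$ of (\ref{basicclock}) is continuous and strictly increasing with $H(\infty)=\infty$ (which you may quote from (\ref{clocktlarge}) rather than from the heuristic decay of $|Z_s|^{-\alpha}$, which only controls expectations), each $A(u)$ is a stopping time, and the strong Markov property combined with isotropy and the $1/\alpha$-scaling makes the conditional law of the post-$A(u)$ winding, run with its own clock, independent of $Z_{A(u)}$ — this is exactly how the Lévy property is obtained in \cite{GVA86,BeW96}. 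Conjugation invariance gives symmetry (note the starting point $1$ is fixed by conjugation), and since the time change is continuous and strictly increasing, the jumps of $\theta_{A(\cdot)}$ are exactly the jumps of $\theta$, each being the signed angle subtended at $0$ by the segment $[Z_{t-},Z_t]$, hence of modulus at most $\pi$ (a.s.\ strictly less, since there are no jumps across $0$ — consistent with the description of the Lévy measure in Lemma \ref{lemma2}).

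The one step that is genuinely incomplete — and you flag it yourself — is the absence of a Gaussian component. Finiteness of $\sum_{s\le t}(\Delta\theta_s)^2$, which is what your bound $|\Delta\theta_s|\le C\,|\Delta Z_s|/|Z_{s-}|$ yields, does not by itself exclude a continuous martingale part; you still need to know that the quadratic variation of $\theta$ is carried by its jumps. The clean way to close this is to use that $Z$ is a purely discontinuous semimartingale (no Brownian part), stays in $\mathbb{C}\setminus\{0\}$ and has no jumps across $0$, and to apply It\^o's formula to a local branch of $\log z$, which is $C^2$ away from the origin: the continuous-martingale contribution is $\int Z_{s-}^{-1}\,dZ^{c}_s=0$, so $\log|Z|+i\theta$ has zero continuous local-martingale part; a continuous, strictly increasing time change preserves this, and for a Lévy process a vanishing continuous-martingale part is precisely a vanishing Gaussian coefficient. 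With that substitution for your ``quadratic pure-jump'' paragraph, your outline is a faithful version of the proof the paper delegates to \cite{BeW96,GVA86}.
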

We now denote by $dz$ the Lebesgue measure on $\mathbb{C}$. Then, for every complex number $z\neq0$, $\phi(z)$ denotes
the determination of its argument valued in $\left(\right.-\pi,\pi\left.\right]$.
\begin{lem}\label{lemma2}
The L\'{e}vy measure of $\theta_{A(\cdot)}$ is the image of the L\'{e}vy measure $\nu$ of $Z$
by the mapping $z\rightarrow\phi(1+z)$. As a consequence, $E[(\theta_{A(u)})^{2}]=u k(\alpha)$, where
\beq\label{constalpha}
k(\alpha) = \frac{\alpha \ 2^{-1+\alpha/2} \Gamma(1+\alpha/2)}{\pi\Gamma(1-\alpha/2)} \int_{\mathbb{C}} |z|^{-2-\alpha} |\phi(1+z)|^{2} dz \ .
\eeq
\end{lem}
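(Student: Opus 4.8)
The plan is to identify the L\'evy measure of the time-changed winding process $\theta_{A(\cdot)}$ with the pushforward of $\nu$ under the argument map, and then to read off the second moment from the general formula relating a L\'evy process without Gaussian part to the integral of its L\'evy measure. First I would recall, from Lemma \ref{lemma1}, that $(\theta_{A(u)},u\geq 0)$ is a symmetric real-valued L\'evy process with no Gaussian component and L\'evy measure supported in $[-\pi,\pi]$; hence its characteristic exponent has the form $\Psi(\lambda)=\int_{[-\pi,\pi]}(1-\cos(\lambda y))\,m(dy)$ for some symmetric measure $m$. The task is to show $m$ is the image of $\nu$ under $z\mapsto\phi(1+z)$.

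The key mechanism is the jump structure. Each jump of $Z$, say from $Z_{s-}$ to $Z_{s-}+\Delta Z_s$, produces a jump in the argument process $\theta$ equal, up to additive multiples of $2\pi$ coming from the "fill in" convention, to the increment of the continuous determination of $\arg Z$; but after the time change by $A$, and using isotropy together with self-similarity, one can reduce the state at a jump time to the reference point $1$. Concretely, I would argue that the jumps of $(\theta_{A(u)})$ form a Poisson point process whose intensity is obtained by: (i) noting that between jumps $\theta$ moves continuously and the time change $H$ normalises the rate at which jumps of a given relative size occur so that it no longer depends on $|Z_{s-}|$ — this is precisely the role of the factor $|Z_s|^{-\alpha}$ in \eqref{basicclock}, matched against the scaling $\nu(dx)\propto|x|^{-2-\alpha}dx$ of the L\'evy measure of $Z$; (ii) applying an $O(2)$-rotation (isotropy) to send $Z_{s-}/|Z_{s-}|$ to $1$, so that a jump of relative displacement $z$ changes the normalised position from $1$ to $1+z$ and hence changes the argument by $\phi(1+z)$, modulo the $2\pi$-ambiguity which is irrelevant for the L\'evy measure since it only records the actual analytic jump of the winding curve. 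Putting these together gives that the L\'evy measure of $\theta_{A(\cdot)}$ is the image of $\nu$ under $z\mapsto\phi(1+z)$, which is the first assertion; since this is attributed to Bertoin and Werner via \cite{GVA86,BeW94}, I would keep this part brief and largely cite it.

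For the "as a consequence" part, I would use that for a L\'evy process $Y$ with $Y_0=0$, no drift, no Gaussian part, and L\'evy measure $m$ satisfying $\int y^2\,m(dy)<\infty$, one has $E[Y_u]=0$ when $m$ is symmetric and $E[Y_u^2]=u\int y^2\,m(dy)$. Applying this with $m$ the image measure just identified, the change of variables $y=\phi(1+z)$ turns $\int y^2\,m(dy)$ into $\int_{\mathbb{C}}|\phi(1+z)|^2\,\nu(dz)$, and substituting the explicit density $\nu(dz)=\frac{\alpha\,2^{-1+\alpha/2}\Gamma(1+\alpha/2)}{\pi\Gamma(1-\alpha/2)}|z|^{-2-\alpha}\,dz$ yields exactly $k(\alpha)$ as in \eqref{constalpha}. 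The one point that needs care is finiteness of this integral: near $z=0$ the map $1+z$ stays near $1$, so $|\phi(1+z)|^2=O(|z|^2)$, which against $|z|^{-2-\alpha}$ gives an integrable singularity since $\alpha<2$; near infinity $|\phi(1+z)|^2\leq\pi^2$ is bounded while $|z|^{-2-\alpha}$ is integrable at infinity; so the integral converges for all $\alpha\in(0,2)$, and in particular $Y=\theta_{A(\cdot)}$ is genuinely square-integrable.

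The main obstacle I anticipate is making the reduction in step (ii) fully rigorous: one must justify that the jumps of $\theta_{A(\cdot)}$, as an additive functional of $Z$, have an intensity measure that, after the time change and after invoking $O(2)$-invariance of the whole law of $Z$, depends only on the \emph{relative} jump $z=\Delta Z_s/|Z_{s-}|$ and not on $Z_{s-}$ itself; this is where the interplay of isotropy, $\alpha$-self-similarity, and the specific power in the clock \eqref{basicclock} is essential. Since this identification is exactly the content of \cite{GVA86} and \cite{BeW94} as cited, I would present it at the level of this scaling/invariance heuristic and refer to those papers for the detailed justification, then carry out the moment computation in full.
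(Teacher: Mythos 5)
Your proposal is correct, and the main thing to note is that the paper itself contains no proof of this lemma: it is quoted from Bertoin and Werner, with the reader referred to their work for the proofs (the two lemmas are stated right after the sentence citing \cite{GVA86} and \cite{BeW94}, the source being \cite{BeW96}). So there is no in-paper argument to match against; the comparison is with the cited source, and your sketch reproduces its mechanism. Your steps (i)--(ii) are the right accounting: a jump of $Z$ at time $s$ changes the filled-in winding curve by the angle swept by the segment from $Z_{s-}$ to $Z_{s}$, which equals $\phi(Z_{s}/Z_{s-})=\phi\bigl(1+\Delta Z_{s}/Z_{s-}\bigr)$ with no $2\pi$ ambiguity precisely because $Z$ a.s. has no jumps across $0$ (as recorded in Section \ref{secpre}); by the $|x|^{-2-\alpha}$ scaling of $\nu$ and isotropy, the relative jump $\Delta Z_{s}/Z_{s-}$ has intensity $|Z_{s-}|^{-\alpha}\nu(dz)$, and the clock $H$, running at rate $|Z_{s}|^{-\alpha}$, cancels exactly this factor, so after time-changing by $A$ the jump intensity is the image of $\nu$ under $z\mapsto\phi(1+z)$. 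Deferring the rigorous Poisson-point-process justification to \cite{GVA86} and Bertoin--Werner is consistent with what the paper itself does. The second assertion you prove in full, and correctly: for a symmetric L\'{e}vy process with no Gaussian part and L\'{e}vy measure $m$ of bounded support, $E[Y_{u}^{2}]=u\int y^{2}\,m(dy)$; the change of variables $y=\phi(1+z)$ and the explicit density of $\nu$ computed in Section \ref{secpre} give exactly $u\,k(\alpha)$ as in (\ref{constalpha}), and your finiteness check ($|\phi(1+z)|=O(|z|)$ near $0$ against $|z|^{-2-\alpha}$, boundedness by $\pi$ together with integrability of $|z|^{-2-\alpha}$ at infinity) is precisely what is needed for $\alpha\in(0,2)$.
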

Using Lemma \ref{lemma1}, we can obtain the analogue of the skew product representation for planar BM
which is the Lamperti correspondence for stable processes.
Indeed, following \cite{GVA86} and using Lamperti's relation (see e.g. \cite{Lam72,Kiu80,Chy06,CPP11} or \cite{ReY99}) and Lemma \ref{lemma1},
there exist two real-valued L\'{e}vy processes $(\xi_{u},u\geq0)$ and $(\rho_{u},u\geq0)$, the first one non-symmetric
whereas the second one symmetric, both starting from 0, such that:
\beq\label{skew-product}
\log\left|Z_{t}\right|+i\theta_{t}
=\left(\xi_{u}+i\rho_{u}\right)
\Bigm|_{u=H_{t}=\int^{t}_{0}\frac{ds}{\left|Z_{s}\right|^{\alpha}}} \ .
\eeq
We remark here that $|Z|$ and $Z_{A(\cdot)}/|Z_{A(\cdot)}|$ are NOT independent. Indeed, the processes
$|Z_{A(\cdot)}|$ and $Z_{A(\cdot)}/|Z_{A(\cdot)}|$ jump at the same times hence they cannot be independent.
Moreover, $A(\cdot)$ depends only upon $|Z|$, hence $|Z|$ and $Z_{A(\cdot)}/|Z_{A(\cdot)}|$ are not independent.
For further discussion on the independence, see e.g. \cite{LiW11}, where is shown that an isotropic
$\alpha$-self-similar Markov process has a skew-product structure if and only if its radial and its
angular part do not jump at the same time.

We also remark that
\beq\label{HAclock}
H^{-1}(u)\equiv A(u)\equiv \inf\{t\geq0: H(t)>u\}=\int^{u}_{0} \exp\{\alpha \xi_{s}\} \ ds \ .
\eeq
Hence, (\ref{skew-product}) may be equivalently written as:
\beq\label{skew-product2}
\left\{
  \begin{array}{ll}
    \left|Z_{t}\right|=\exp\left(\xi(H_{t})\right)\Leftrightarrow\left|Z_{A(t)}\right|=\exp\left(\xi_{t}\right), & \hbox{(extension of Lamperti's identity)} \\
    \theta_{t}=\rho(H_{t})\Leftrightarrow \theta\left(A(t)\right)=\rho(t) \ .
  \end{array}
\right.
\eeq
We also define the random times $T^{|\theta|}_{c}\equiv\inf\{ t:|\theta_{t}|\geq c \}$ and
$T^{|\rho|}_{c}\equiv\inf\{ t:|\rho_{t}|\geq c \}$, $(c>0)$.
Using the "generalized" skew-product representation (\ref{skew-product}) (or (\ref{skew-product2})), we obtain:
\beq \label{skew-productplanar}
T^{|\theta|}_{c}=H^{-1}_{u}\Bigm|_{u=T^{|\rho|}_{c}}=\int^{T^{|\gamma|}_{c}}_{0}ds\exp(\alpha\xi_{s})\equiv A_{T^{|\rho|}_{c}} \ .
\eeq
Following \cite{VaY11a}, for the random times $T^{\theta}_{-d,c}\equiv\inf\{ t:\theta_{t}\notin(-d,c) \}$, $d,c>0$,
and $T^{\theta}_{c}\equiv\inf\{ t:\theta_{t}\geq c \}$, we have:
\begin{rem} \label{rem}
For $0<c<d$, the random times $T^{\theta}_{-d,c}$, $T^{|\theta|}_{c}$ and $T^{\theta}_{c}$
satisfy the trivial inequality:
\beq
T^{|\theta|}_{c}\leq T^{\theta}_{-d,c} \leq T^{\theta}_{c} .
\eeq
Hence, with $p>0$:
\beq
E\left[\left(T^{|\theta|}_{c}\right)^{p}\right]\leq E\left[\left(T^{\theta}_{-d,c}\right)^{p}\right]
\leq E\left[\left(T^{\theta}_{c}\right)^{p}\right] \ ,
\eeq
and for the negative moments:
\beq
E\left[\left(T^{\theta}_{c}\right)^{-p}\right]\leq E\left[\left(T^{\theta}_{-d,c}\right)^{-p}\right]
\leq E\left[\left(T^{|\theta|}_{c}\right)^{-p}\right] \ .
\eeq
\end{rem}
\begin{rem} \label{rem1}
For further details concerning the finiteness of the positive moments of
$T^{|\theta|}_{c}$, see e.g. \cite{DeB90,BaB04}.
Recall also that for the positive moments of the exit time from a cone of planar Brownian motion,
Spitzer showed that (with obvious notation) \cite{Spi58,Bur77}:
\beq
E\left[\left(T^{|\vartheta|}_{c}\right)^{p}\right]<\infty \Leftrightarrow p<\frac{\pi}{4c} \ ,
\eeq
whereas all the negative moments $E\left[\left(T^{|\vartheta|}_{c}\right)^{-p}\right]$ are finite \cite{VaY11a}.
\end{rem}
We denote now by $\Psi(u)$ the exponent of the symmetric L\'{e}vy process $\rho$, hence (L\'{e}vy-Khintchine formula)
$E[e^{iu \rho_{t}}]=e^{-t\Psi(u)}$, with:
\beq\label{lm}
    \Psi(u) = \int_{(-\infty,\infty)}\left(1-e^{iux}+iux 1_{\{|x|\leq1\}}\right) \mu(dx), \ u\in\mathbb{R},
\eeq
where $\mu$ is a Radon measure on $\mathbb{R}\setminus \{0\}$ such that:
\beqq
    \int_{(-\infty,\infty)} (x^{2}\wedge1) \mu(dx)< \infty \ .
\eeqq
$\mu$ is the L\'{e}vy measure of $\rho$ and is symmetric.

\section{Large time asymptotics}\label{larget}
Concerning the clock $H$,
we have the almost sure convergence (see Corollary 1 in Bertoin and Werner \cite{BeW96}):
\beq\label{clocktlarge}
\frac{H(e^{u})}{u}  &\overset{{a.s.}}{\underset{u\rightarrow\infty}\longrightarrow}& 2^{-\alpha} \frac{\Gamma(1-\alpha/2)}{\Gamma(1+\alpha/2)}
\equiv K(\alpha) = E\left[|Z_{1}|^{-\alpha}\right] \ .
\eeq
Moreover, we have the following:
\begin{prop}\label{proptightness}
The family of processes
$$H^{(u)}_{x}\equiv\left(\frac{H(e^{ux})}{u}, x\geq0\right)$$ is tight, as $u\rightarrow \infty$.
\end{prop}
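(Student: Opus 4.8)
The plan is to establish tightness of the family $\{H^{(u)}\}$ in the Skorohod space $D([0,\infty))$ by exploiting the monotonicity of $H$ together with the almost sure convergence \eqref{clocktlarge}. Since each path $x\mapsto H^{(u)}_x$ is nondecreasing, nonnegative and continuous (the clock $H$ is an integral of a locally bounded function, hence absolutely continuous), the usual compactness criteria in $D([0,\infty))$ simplify drastically: for monotone processes, tightness follows once one controls the one-dimensional marginals at a fixed (or a dense set of) time points, because a nondecreasing function is determined by its values on a dense set and Helly's selection principle provides the needed sequential compactness. Concretely, I would reduce to checking that for each fixed $x>0$ the family of real random variables $H^{(u)}_x = H(e^{ux})/u$ is tight as $u\to\infty$, i.e.\ bounded in probability, and that it does not escape to $+\infty$ at any finite $x$.

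The first main step is the estimate of $E[H(e^{ux})]$. Using the definition \eqref{basicclock} and Fubini,
\beqq
E\left[H(e^{ux})\right]=\int_0^{e^{ux}} E\left[|Z_s|^{-\alpha}\right]\,ds,
\eeqq
and by the $\alpha$-self-similarity of $Z$ (relation \eqref{ass}), $E[|Z_s|^{-\alpha}] = E[|Z_1|^{-\alpha}]\, s^{-1} = K(\alpha)/s$ for $s$ large, with an integrable-at-$0$ correction coming from the deterministic start at $z_0=1$ (for small $s$, $|Z_s|$ is near $1$, so $E[|Z_s|^{-\alpha}]$ stays bounded). Hence $E[H(e^{ux})] = K(\alpha)\,ux + O(1)$ as $u\to\infty$, uniformly for $x$ in compact sets, and so $E[H^{(u)}_x]\to K(\alpha)x$. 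Markov's inequality then gives, for every $x>0$ and every $\eta>0$, a bound $P(H^{(u)}_x > M)\le (K(\alpha)x + o(1))/M$, which is the required tightness of the marginal, uniform on compacts in $x$.

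The second step upgrades marginal tightness to process tightness. Because $H^{(u)}$ is nondecreasing, for any finite $T$ one has $\sup_{x\le T} H^{(u)}_x = H^{(u)}_T$, which is tight by Step~1; this kills the "explosion" modes. For the oscillation/modulus-of-continuity condition in $D([0,T])$, monotonicity again helps: the increment over any subinterval $[a,b]$ is $H^{(u)}_b - H^{(u)}_a \le H^{(u)}_T$, and more refined control over small increments follows from $E[H^{(u)}_b - H^{(u)}_a] = K(\alpha)(b-a) + o(1)$, so that these increments are uniformly small in probability as $b-a\to 0$, uniformly in $u$ — precisely Aldous' (or the classical Billingsley) tightness criterion for monotone processes. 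I expect the main obstacle to be the bookkeeping near $s=0$ in the expectation computation: one must verify that the singularity $|x|^{-\alpha}$ integrated against the density of $Z_s$ is genuinely integrable near $0$ for the full range $\alpha\in(0,2)$ — this uses that $Z$ starts at $1$, not at $0$, so $|Z_s|$ is bounded away from $0$ for small $s$ with overwhelming probability, the only danger being a large jump, whose probability is $O(s)$ and easily dominated. Once that integrability and the asymptotic $E[|Z_1|^{-\alpha}]=K(\alpha)$ (already recorded in \eqref{clocktlarge}) are in hand, the rest is routine.
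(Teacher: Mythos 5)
Your core estimate is the same as the paper's: write $E\left[H(e^{ub})-H(e^{ua})\right]=\int_{e^{ua}}^{e^{ub}}E\left[|Z_{s}|^{-\alpha}\right]ds$, show $E\left[|Z_{s}|^{-\alpha}\right]\leq C/s$ for large $s$ (bounded for small $s$), and conclude via Markov's inequality plus monotonicity. The paper derives the key bound through the stationary Ornstein--Uhlenbeck type process $\tilde{Z}_{u}=e^{-u/\alpha}Z_{e^{u}}$ and the split $E_{1}+E_{2}$ (dominated convergence away from the singularity, the bounded density of the stable law near it); you derive it by writing $Z_{s}\stackrel{(law)}{=}1+s^{1/\alpha}Z^{(0)}_{1}$. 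Note, though, that the identity $E[|Z_{s}|^{-\alpha}]=K(\alpha)s^{-1}$ is not exact: $Z$ starts from $1$ and is therefore not self-similar, and what is really needed is $\sup_{w\in\mathbb{C}}E\left[|w+Z^{(0)}_{1}|^{-\alpha}\right]<\infty$, i.e.\ uniformity in the shift; this is precisely what the paper's $E_{1},E_{2}$ decomposition delivers, and your bounded-density/local-integrability remark (valid since $\alpha<2$ in dimension $2$) supplies the same ingredient, so in substance this part coincides with the paper.

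The soft spots are in how you pass from these estimates to tightness. First, the principle ``monotone paths $+$ tight one-dimensional marginals $\Rightarrow$ tightness in $D$ via Helly'' is false: uniformly bounded nondecreasing functions are not relatively compact in the $J_{1}$ topology (two unit jumps at distance $a_{n}\rightarrow0$ have tight marginals, satisfy your per-interval increment bound, yet admit no $J_{1}$-convergent subsequence). Second, the upgrade you sketch does not quite close as stated: the first-moment bound gives $P\left(H^{(u)}_{x+\delta}-H^{(u)}_{x}\geq\varepsilon\right)\leq C\delta/\varepsilon$, which is $O(\delta)$ and not $o(\delta)$, so a union bound over the $\sim T/\delta$ subintervals does not control the modulus (Billingsley's criterion needs more), while Aldous' criterion requires the increment bound uniformly over stopping times, which you have not verified (and which is less immediate, since the uniform-in-starting-point bound $E_{z}[|Z_{r}|^{-\alpha}]\leq C/r$ is not integrable at $r=0$). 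The simplest repair is available to you for free: by (\ref{clocktlarge}), $H(e^{ux})/u\rightarrow K(\alpha)x$ a.s.\ for each fixed $x$, hence a.s.\ simultaneously on a countable dense set; since the paths $x\mapsto H^{(u)}_{x}$ are nondecreasing and the limit $x\mapsto K(\alpha)x$ is continuous and deterministic, a Dini/P\'{o}lya argument upgrades this to a.s.\ uniform convergence on compacts, which yields convergence, and a fortiori tightness, in $D$ with no moment estimates at all. (To be fair, the paper's own proof also stops at the per-increment bound (\ref{tightnessH2}), so your estimates match its substance; it is the reduction principles you invoke that need to be replaced by an argument of this type.)
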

\noindent{\textbf{Proof of Proposition \ref{proptightness}:}} \\
To prove this, we could repeat some arguments of Pitman and Yor \cite{PiY89}
(see the estimates in their proof of Theorem 6.4), however,
we give here a straightforward proof, using the definition of tightness: \\
for every $\varepsilon,\eta>0$, there exist $\delta>0$ and $C_{\delta}>0$ such that, for every $0<x<y$:
\beq\label{tightnessH}
P\left(\sup_{|x-y|\leq \delta}\left|H(e^{uy})-H(e^{ux})\right|\geq u \varepsilon\right)\leq \eta \ , \ \ \mathrm{for} \ \ u\geq C_{\delta} \ ,
\eeq
or equivalently:
\beq\label{tightnessH2}
P\left(\frac{1}{u} \ \left|H(e^{u(x+\delta)})-H(e^{ux})\right|\geq \varepsilon\right)\leq \eta \ , \ \ \mathrm{for} \ \ u\geq C_{\delta} \ .
\eeq
First, following Bertoin and Werner \cite{BeW96}, we introduce the "Ornstein-Uhlenbeck type" process:
\beq\label{defZtilde}
\tilde{Z}_{u}=\exp(-u/\alpha)Z_{\exp(u)}, \ \ \ u\geq0 \ ,
\eeq
which is a stationary Markov process under $P_{0}$ (see e.g. \cite{Bre68}). We denote by $p_{t}(\cdot)$ the semigroup of $Z$:
$$p_{t}(\bar{z})=P_{0}(Z_{t}\in d\bar{z})/d\bar{z}, \ \ \bar{z}\in\mathbb{C}.$$
We denote by $Z^{(0)}$ another stable process starting at 0.
Then, using the scaling property, given that $\tilde{Z}_{0}\equiv Z_{1}\equiv 1+Z^{(0)}_{1}=\bar{x}$,
the semigroup $q_{u}(\cdot)$ of $\tilde{Z}$ is given by:
\beq\label{semigr}
q_{u}(\bar{x},\bar{y})&=&p_{\exp(u)-1}\left(e^{u/\alpha}\bar{y}-\bar{x}\right)e^{2u/\alpha} \nonumber \\
&=& (e^{u}-1)^{-2/\alpha}e^{2u/\alpha} p_{1}\left((e^{u}-1)^{-1/\alpha}(e^{u/\alpha}\bar{y}-\bar{x})\right) \nonumber \\
&=& (l(u))^{2} p_{1}\left(l(u)(\bar{y}-e^{-u/\alpha}\bar{x})\right),
\eeq
where $l(v)=e^{v/\alpha}(e^{v}-1)^{-1/\alpha}$.
For every $\delta>0$ and changing variables $s=\exp(v)$, with obvious notation, we have:
\beq\label{tightproof}
E\left[\left|H(e^{u(x+\delta)})-H(e^{ux})\right|\right]=
\int^{e^{u(x+\delta)}}_{e^{ux}} E\left[|Z_{s}|^{-\alpha}\right] \ ds
=\int^{u(x+\delta)}_{ux} E_{\tilde{Z}_{0}}\left[|\tilde{Z}_{v}|^{-\alpha}\right] \ dv \ .
\eeq
We also define $\varepsilon(v)\equiv l(v)e^{-v/\alpha}=(e^{v}-1)^{-1/\alpha}$.
From (\ref{defZtilde}), using the stability of $Z$, we have:
\beq
\tilde{Z}_{v}&=&e^{-v/\alpha}Z_{\exp(v)}=e^{-v/\alpha}\left(Z_{\exp(v)-1}+Z_{1}\right)
\stackrel{(law)}{=}e^{-v/\alpha}\left((e^{v}-1)^{1/\alpha}Z^{(0)}_{1}+Z_{1}\right) \nonumber \\
&=& (l(v))^{-1}\left(Z^{(0)}_{1}+\varepsilon(v)Z_{1}\right).
\eeq
Hence (for simplicity, we use $E\equiv E_{0}$):
\beq\label{tightproof1}
E_{\tilde{Z}_{0}}\left[|\tilde{Z}_{v}|^{-\alpha}\right]=(l(v))^{\alpha} E\left[|Z^{(0)}_{1}+\varepsilon(v)\tilde{Z}_{0}|^{-\alpha}\right]
\equiv (l(v))^{\alpha} \left(E_{1}+E_{2}\right),
\eeq
where, with $\delta'>0$,
\beqq
E_{1}&=& E\left[|Z^{(0)}_{1}+\varepsilon(v)\tilde{Z}_{0}|^{-\alpha}:|Z^{(0)}_{1}+\varepsilon(v)\tilde{Z}_{0}|\geq\delta'\right] \ , \\
E_{2}&=& E\left[|Z^{(0)}_{1}+\varepsilon(v)\tilde{Z}_{0}|^{-\alpha}:|Z^{(0)}_{1}+\varepsilon(v)\tilde{Z}_{0}|\leq\delta'\right] \ .
\eeqq
We have: $l(v)\stackrel{v\rightarrow \infty}{\longrightarrow} 1$ and
$\varepsilon(v)\stackrel{v\rightarrow \infty}{\longrightarrow} 0$, thus, by Dominated Convergence Theorem:
\beq\label{tightproof2}
E_{1}\stackrel{v\rightarrow \infty}{\longrightarrow} E\left[|Z^{(0)}_{1}|^{-\alpha}:|Z^{(0)}_{1}|\geq\delta'\right]
\stackrel{\delta'\rightarrow 0}{\longrightarrow} E\left[|Z^{(0)}_{1}|^{-\alpha}\right] \ .
\eeq
Moreover, changing the variables: $\bar{w}=\bar{z}+\varepsilon(v)\bar{x}$, we have:
\beqq
E_{2}&=& \int_{\bar{x},\bar{z}:|\bar{z}+\varepsilon(v)\bar{x}|\leq\delta'}
P(\tilde{Z}_{0}\in d\bar{x}) \ P(Z^{(0)}_{1}\in d\bar{z}) \ |\bar{z}+\varepsilon(v)\bar{x}|^{-\alpha} \\
&=& \int_{\bar{x},\bar{w}:|\bar{w}|\leq\delta'}
P(\tilde{Z}_{0}\in d\bar{x}) \ P(Z^{(0)}_{1}\in d\bar{w}) \ |\bar{w}|^{-\alpha} .
\eeqq
Remarking now that for stable processes: $P(Z^{(0)}_{1}\in d\bar{y})\leq C' d\bar{y}$, where $C'$ stands for a positive constant
and using $w=(w_{1},w_{2})$, we have:
\beq\label{tightproof3}
E_{2}\leq C' \int_{\bar{x},\bar{w}:|\bar{w}|\leq\delta'}
P(\tilde{Z}_{0}\in d\bar{x}) \ \frac{dw_{1} \ dw_{2}}{|\bar{w}|^{\alpha}}=
C' \int_{\bar{z}:|\bar{w}|\leq\delta'}\frac{dw_{1} \ dw_{2}}{|\bar{w}|^{\alpha}} \stackrel{\delta'\rightarrow 0}{\longrightarrow} 0 \ .
\eeq
Thus, from (\ref{tightproof1}), (\ref{tightproof2}) and (\ref{tightproof3}), invoking again the Dominated
Convergence Theorem, we deduce:
\beq
\lim_{u\rightarrow\infty} E_{\tilde{Z}_{0}}\left[|\tilde{Z}_{v}|^{-\alpha}\right]=E\left[|Z^{(0)}_{1}|^{-\alpha}\right] \ ,
\eeq
which is a constant. Hence, for every $\varepsilon, \eta>0$, there exist $\delta>0$ and $C_{\delta}>0$ such that
(\ref{tightnessH2}) is satisfied for $u\geq C_{\delta}$.
\hfill \QED
\\ \\
Bertoin and Werner in \cite{BeW96} obtained the analogue of Spitzer's asymptotic Theorem \cite{Spi58}
for isotropic stable L\'{e}vy processes of index $\alpha\in(0,2)$:
\begin{theo}\label{SpiBW}
The family of processes $$\Theta^{(c)}_{t}\equiv\left(c^{-1/2}\theta_{\exp(ct)},t\geq0\right)$$
converges in distribution on $D(\left[\right.0,\infty\left.\right),\mathbb{R})$
endowed with the Skorohod topology, as $c\rightarrow\infty$, to $\left(\sqrt{r(\alpha)}B_{t},t\geq0\right)$,
where $\left(B_{s},s\geq0\right)$ is a real valued Brownian motion and
\beq\label{cst}
r(\alpha)=\frac{\alpha \ 2^{-1-\alpha/2}}{\pi} \int_{\mathbb{C}} |z|^{-2-\alpha} |\phi(1+z)|^{2} dz \ .
\eeq
\end{theo}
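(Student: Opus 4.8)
The plan is to exploit the generalized skew-product representation (\ref{skew-product2}): $\theta_{t}=\rho(H_{t})$, where $\rho$ is a symmetric real-valued L\'{e}vy process started at $0$ and $H_{t}=\int_{0}^{t}|Z_{s}|^{-\alpha}\,ds$ is the additive clock. By Lemma \ref{lemma2} the process $\rho$ is centered with finite variance $E[\rho_{u}^{2}]=E[(\theta_{A(u)})^{2}]=u\,k(\alpha)$, so the classical functional central limit theorem for finite-variance L\'{e}vy processes gives
\[ R^{(c)}:=\bigl(c^{-1/2}\rho_{cs},\,s\geq0\bigr)\ \Longrightarrow\ \bigl(\sqrt{k(\alpha)}\,B_{s},\,s\geq0\bigr)\quad\text{in }D([0,\infty),\mathbb{R})\text{ as }c\to\infty, \]
with $B$ a standard real Brownian motion. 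The key elementary observation is the exact factorization $\Theta^{(c)}_{t}=c^{-1/2}\theta_{\exp(ct)}=c^{-1/2}\rho\bigl(H(e^{ct})\bigr)=R^{(c)}_{\Phi^{(c)}_{t}}=\bigl(R^{(c)}\circ\Phi^{(c)}\bigr)_{t}$, where $\Phi^{(c)}_{t}:=H(e^{ct})/c$.

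Next I would pin down the limit of the clock process. By the a.s. convergence (\ref{clocktlarge}), for each fixed $t>0$ one has $\Phi^{(c)}_{t}=t\cdot H(e^{ct})/(ct)\to t\,K(\alpha)$ a.s.; together with the tightness supplied by Proposition \ref{proptightness} (or, more simply, with the monotonicity of $t\mapsto\Phi^{(c)}_{t}$ and the continuity of the limit, which forces local uniform convergence), this yields $\Phi^{(c)}\to(K(\alpha)t,\,t\geq0)$ in $D([0,\infty),\mathbb{R})$, and in probability since the limit is deterministic. Because that limit is a fixed nonrandom element of $D$, the converging-together lemma upgrades the two marginal statements to the \emph{joint} convergence $(R^{(c)},\Phi^{(c)})\Longrightarrow(\sqrt{k(\alpha)}B,\,K(\alpha)\cdot)$ in $D([0,\infty),\mathbb{R})^{2}$ --- and this is precisely where one avoids needing any (in fact false) independence between $|Z|$ and the angular part of $Z$.

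I would then apply Whitt's continuous-mapping theorem for the composition functional $(x,y)\mapsto x\circ y$: restricted to pairs $(x,y)$ with $x$ continuous and $y$ continuous, nondecreasing and strictly increasing, this functional is continuous for the Skorohod topology on $D([0,T],\mathbb{R})$ for each $T$, hence on $D([0,\infty),\mathbb{R})$. Since a.s. $\sqrt{k(\alpha)}B$ is continuous and $K(\alpha)\cdot$ is continuous and strictly increasing, the continuous-mapping theorem applied to the joint convergence above gives $\Theta^{(c)}=R^{(c)}\circ\Phi^{(c)}\Longrightarrow\sqrt{k(\alpha)}\,B_{K(\alpha)t}$. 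It remains to recognize the limit: Brownian scaling gives $(B_{K(\alpha)t})_{t\geq0}\overset{(law)}{=}(\sqrt{K(\alpha)}\,B_{t})_{t\geq0}$, and the elementary identity $k(\alpha)K(\alpha)=r(\alpha)$ --- immediate from (\ref{constalpha}), (\ref{clocktlarge}) and (\ref{cst}), since the Gamma factors cancel and $2^{-1+\alpha/2}\cdot2^{-\alpha}=2^{-1-\alpha/2}$ --- delivers the stated limit $(\sqrt{r(\alpha)}\,B_{t},\,t\geq0)$.

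The step I expect to be the main obstacle is the composition step: one must verify that the pair $(\sqrt{k(\alpha)}B,\,K(\alpha)\cdot)$ falls in the continuity set of the composition functional on Skorohod space (this functional is genuinely discontinuous in general), which is exactly why the a.s. continuity of Brownian motion and the strict monotonicity and continuity of the deterministic clock limit are all used. A minor secondary point is the passage from $D([0,T],\mathbb{R})$ to $D([0,\infty),\mathbb{R})$, handled by the standard projective-limit argument over $T\uparrow\infty$; everything else (the FCLT for $\rho$, the clock asymptotics, and the constant bookkeeping) is routine.
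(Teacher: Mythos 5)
Your proposal is correct and follows essentially the same route as the paper's proof: the skew-product $\theta_t=\rho(H_t)$, a functional CLT for the finite-variance symmetric L\'evy process $\rho$ giving $\sqrt{k(\alpha)}B$, weak convergence of the rescaled clock $H(e^{ct})/c$ to the deterministic line $K(\alpha)t$ (via (\ref{clocktlarge}) and Proposition \ref{proptightness}), and Whitt's continuity of the composition map, with $k(\alpha)K(\alpha)=r(\alpha)$ identifying the limit. If anything, you are slightly more careful than the paper, making explicit the converging-together step to joint convergence (the clock limit being deterministic) and observing that monotonicity of the clock plus continuity of the limit already yields local uniform convergence, so the tightness proposition could be bypassed.
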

\noindent{Using some results due to Whitt \cite{Whi80}, we can obtain a simple proof of this Theorem.}
\\ \\
\textbf{Proof of Theorem \ref{SpiBW}: (new proof)} \\
Essentially, an argument of continuity of the composition function (Theorem 3.1 in \cite{Whi80}) may replace the martingale argument
in the lines of the proof for $t\rightarrow\infty$ from Bertoin and Werner.
We split the proof in three parts: \\ \\
$\left.\mathrm{i}\right)$ Concerning the clock $H$,
we have the almost sure convergence (\ref{clocktlarge}):
\beqq
\frac{H(e^{u})}{u}  &\overset{{a.s.}}{\underset{u\rightarrow\infty}\longrightarrow}& K(\alpha) = E\left[|Z_{1}|^{-\alpha}\right] \ .
\eeqq
From this result follows the convergence of the finite dimensional distributions of $v^{-1}H(\exp(vt))$,
for $v\rightarrow\infty$ and every $t>0$. \\
Moreover, from Proposition \ref{proptightness}, the family of processes
$$H^{(u)}_{x}\equiv\left(\frac{H(e^{ux})}{u}, x\geq0\right)$$ is tight as $u\rightarrow \infty$.
Hence, from (\ref{clocktlarge}) and (\ref{tightnessH}), finally, $H^{(u)}(t)\equiv(u^{-1}H(\exp(ut)),t\geq0)$
converges weakly to $(t K(\alpha), t\geq0)$ as $u\rightarrow\infty$, i.e.:
\beq\label{prlarge1}
\left(\frac{H(e^{ut})}{u},t\geq 0\right)\overset{{(d)}}{\underset{u\rightarrow\infty}\Longrightarrow}(t K(\alpha), t\geq0),
\eeq
where the convergence in distribution is viewed on $D(\left[\right.0,\infty\left.\right),\mathbb{R})$
endowed with the Skorohod topology. \\ \\
$\left.\mathrm{ii}\right)$
Using the skew product representation analogue (\ref{skew-product}) and Lemma \ref{lemma2}, we have:
\beq\label{prlarge2}
\left(\frac{\rho_{tu}}{\sqrt{u}},t\geq 0\right)\overset{{(d)}}{\underset{u\rightarrow\infty}\Longrightarrow}\left(\sqrt{k(\alpha)} \ B_{t},t\geq 0\right) \ ,
\eeq
where the convergence in distribution is viewed again on $D(\left[\right.0,\infty\left.\right),\mathbb{R})$
endowed with the Skorohod topology and $k(\alpha)$ is given by (\ref{constalpha}). This follows from the convergence
of the finite dimensional distributions:
\beq
\frac{\rho_{u}}{\sqrt{u}}=\frac{\theta_{A(u)}}{\sqrt{u}}\overset{{(d)}}{\underset{u\rightarrow\infty}\longrightarrow}\sqrt{k(\alpha)} \ B_{1} \ ,
\eeq
a condition which is sufficient for the weak convergence (\ref{prlarge2}), since L\'{e}vy processes are semimartingales with stationary independent
increments; for further details see e.g. \cite{Sko91} or \cite{JaS03} (Corollary 3.6, Chapter VII, p. 415). \\ \\
$\left.\mathrm{iii}\right)$
Theorem 3.1 in \cite{Whi80} states that the composition function on
$D(\left[\right.0,\infty\left.\right),\mathbb{R})\times D(\left[\right.0,\infty\left.\right),\left[\right.0,\infty\left.\right))$
is continuous at each $(\rho,H)\in(C(\left[\right.0,\infty\left.\right),\mathbb{R}) \times D_{0}(\left[\right.0,\infty\left.\right),\left[\right.0,\infty\left.\right)))$,
with $C$ denoting the set of continuous functions and $D_{0}$ the subset of increasing cadlag functions in $D$
(hence the subset of non-decreasing cadlag functions in $D$). Hence, from (\ref{prlarge1}) and (\ref{prlarge2}),
we have: for every $t>0$,
\beq\label{cvgprlarge}
\frac{\theta_{\exp(ct)}}{\sqrt{c}}= \frac{\rho_{H(\exp(ct))}}{\sqrt{c}}=\frac{\rho_{c(H(e^{ct})/c)}}{\sqrt{c}} \ .
\eeq
The result now follows from the continuity of the composition function
together with (\ref{cvgprlarge}) and the weak convergence of $H^{(c)}(\cdot)$ and $c^{-1/2}\rho_{c}$,
as $c\rightarrow\infty$.
\hfill \QED
\\ \\
From Theorem \ref{SpiBW}, we can obtain the asymptotic behaviour of the exit times from a cone
for isotropic stable processes which generalizes a recent result in \cite{VaY11a}:
\begin{prop}\label{prop1}
For $c\rightarrow \infty$, for every $x>0$, we have the weak convergence:
\beq\label{res}
\left(\frac{1}{c} \; \log\left(T^{\theta}_{x\sqrt{c}}\right), x\geq 0\right) \overset{{(d)}}{\underset{c\rightarrow \infty}\Longrightarrow}
\left(\tau^{(1/2)}_{\sqrt{1/r(\alpha)}} \ , x\geq 0\right),
\eeq
where for every $y>0$, $\tau^{(1/2)}_{y}$ stands for the $\frac{1}{2}$-stable process defined by: $\tau^{(1/2)}_{y}\equiv\inf\{ t:B_{t}=y \}$.
\end{prop}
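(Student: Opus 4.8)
The plan is to derive (\ref{res}) from Theorem \ref{SpiBW} by composing the convergence of the rescaled winding processes with a first-passage (inverse) functional, in the spirit of the new proof of Theorem \ref{SpiBW} based on Whitt's continuity results \cite{Whi80}. The first step is a deterministic identity relating the exit times to the rescaled windings: fix $x>0$; with $\Theta^{(c)}_t=c^{-1/2}\theta_{\exp(ct)}$ as in Theorem \ref{SpiBW}, the substitution $s=\exp(ct)$ gives
\[
\inf\{t\geq 0:\Theta^{(c)}_t\geq x\}=\inf\{t\geq0:\theta_{\exp(ct)}\geq x\sqrt{c}\}=\frac{1}{c}\log\inf\{s\geq1:\theta_s\geq x\sqrt{c}\}.
\]
Since $\theta$ is cadlag it is bounded on compacts, so $T^{\theta}_{x\sqrt{c}}\uparrow\infty$ a.s. as $c\to\infty$; hence for $c$ large enough the last infimum above equals $T^{\theta}_{x\sqrt{c}}$, and, up to an event of probability tending to $0$, the process $\big(c^{-1}\log T^{\theta}_{x\sqrt{c}}\big)_{x\geq0}$ coincides with $\mathcal{F}(\Theta^{(c)})$, where $\mathcal{F}$ is the first-passage functional $\omega\mapsto\big(\inf\{t\geq0:\omega(t)\geq x\}\big)_{x\geq0}$ on $D([0,\infty),\mathbb{R})$, valued in the space of non-decreasing $[0,\infty]$-valued functions of $x$ endowed with the Skorohod topology (both sides taken with the usual right-continuous modification in $x$).

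By Theorem \ref{SpiBW}, $\Theta^{(c)}\Longrightarrow\sqrt{r(\alpha)}\,B$ on $D([0,\infty),\mathbb{R})$ as $c\to\infty$, with $B$ a standard real Brownian motion, so it suffices to prove that $\mathcal{F}$ is a.s. continuous at $\sqrt{r(\alpha)}\,B$ and to compute $\mathcal{F}(\sqrt{r(\alpha)}\,B)$. For the continuity I would write $\mathcal{F}$ as the composition of the running-maximum functional $\omega\mapsto\bar\omega$, $\bar\omega(t)=\sup_{s\leq t}\omega(s)$, with the right-continuous generalized inverse $M\mapsto M^{\leftarrow}$, $M^{\leftarrow}(x)=\inf\{t:M(t)>x\}$ (up to the right-continuous modification above): $\omega\mapsto\bar\omega$ is $1$-Lipschitz for the local uniform distance, hence continuous from $D$ to $D$ and sends continuous paths to continuous paths, so $\bar\Theta^{(c)}\Longrightarrow\sqrt{r(\alpha)}\,\bar B$, and, the limit being continuous, this convergence also holds for the local uniform topology; then, invoking the continuity of the inverse map at $\sqrt{r(\alpha)}\,\bar B$ \cite{Whi80}, one obtains $\mathcal{F}(\Theta^{(c)})\Longrightarrow\mathcal{F}(\sqrt{r(\alpha)}\,B)$. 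What makes the inverse well-behaved at the limit is that $\sqrt{r(\alpha)}\,\bar B$ is continuous, unbounded, and --- since Brownian motion started at $x$ immediately enters $(x,\infty)$ --- satisfies $\inf\{t:\bar B(t)\geq x\}=\inf\{t:\bar B(t)>x\}$ a.s. for every $x$. Finally, for $x>0$, continuity of $B$ and $B_0=0$ give $\inf\{t:\sqrt{r(\alpha)}\,B_t\geq x\}=\inf\{t:B_t=x/\sqrt{r(\alpha)}\}=\tau^{(1/2)}_{x\sqrt{1/r(\alpha)}}$, so $\mathcal{F}(\sqrt{r(\alpha)}\,B)=\big(\tau^{(1/2)}_{x\sqrt{1/r(\alpha)}}\big)_{x\geq0}$ is the right-hand side of (\ref{res}); by Brownian scaling this process also equals $\big(r(\alpha)^{-1}\tau^{(1/2)}_x\big)_{x\geq0}$ in law, a $\tfrac12$-stable subordinator.

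I expect the main obstacle to be the topological book-keeping of the previous paragraph rather than any probabilistic estimate: $\mathcal{F}$ (equivalently the inverse map) is genuinely discontinuous on $D$, and the limit $\tau^{(1/2)}$ is a pure-jump process, so the convergence must be formulated and checked in the Skorohod $J_1$ topology on the target space; concretely one must control the a.s. countable set of levels $x$ at which $\bar B$ is flat immediately after first reaching $x$ (the jump times of $\tau^{(1/2)}$), noting that the perturbed maxima $\bar\Theta^{(c)}$ turn such flat stretches into steep strictly increasing stretches whose inverses do converge, in $J_1$, to the corresponding jumps of $\tau^{(1/2)}$; once this is in place, the continuous mapping theorem finishes the argument exactly as for Theorem \ref{SpiBW}. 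An alternative route avoiding Theorem \ref{SpiBW} would start from (\ref{skew-productplanar}), $T^{\theta}_{x\sqrt{c}}=\int_0^{T^{\rho}_{x\sqrt{c}}}e^{\alpha\xi_s}\,ds$ with $T^{\rho}_a:=\inf\{u:\rho_u\geq a\}$, so that $c^{-1}\log T^{\theta}_{x\sqrt{c}}$ is asymptotically $\alpha\,E[\xi_1]\,c^{-1}T^{\rho}_{x\sqrt{c}}$, and $c^{-1}T^{\rho}_{x\sqrt{c}}\Longrightarrow\tau^{(1/2)}_{x\sqrt{1/k(\alpha)}}$ by the same first-passage argument applied to $\rho_{c\,\cdot}/\sqrt{c}\Longrightarrow\sqrt{k(\alpha)}\,B$; the constants then match because $\alpha\,E[\xi_1]=1/K(\alpha)=k(\alpha)/r(\alpha)$, with $k(\alpha)$, $r(\alpha)$, $K(\alpha)$ as in (\ref{constalpha}), (\ref{cst}), (\ref{clocktlarge}) --- at the cost of Laplace-type estimates for $\int_0^Te^{\alpha\xi_s}\,ds$.
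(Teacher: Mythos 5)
Your proposal follows essentially the same route as the paper's proof: rewrite $\frac{1}{c}\log T^{\theta}_{x\sqrt{c}}$ via the substitution $t=e^{cs}$ as the first-passage time of the rescaled winding process $\Theta^{(c)}$, invoke Theorem \ref{SpiBW}, and conclude by the continuity of the first-passage (inverse) mapping from Whitt \cite{Whi80}. The only difference is that you spell out the topological bookkeeping (the $s\geq 0$ versus $t\geq 1$ issue, the running-maximum/inverse decomposition, and the behaviour at the flat stretches of $\bar B$ corresponding to the jumps of $\tau^{(1/2)}$) that the paper compresses into a citation of Whitt's Theorem 7.1, so the argument is correct and, if anything, more detailed than the original.
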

\noindent{\textbf{Proof of Proposition \ref{prop1}:}} \\
We rely now upon Theorem \ref{SpiBW}, the analogue of Spitzer's Theorem for stable processes by Bertoin and Werner:
\beq
\Theta^{(c)}_{t}\equiv\left(c^{-1/2}\theta_{\exp(ct)},t\geq0\right)\overset{{(d)}}{\underset{c\rightarrow \infty}\Longrightarrow}
\left(B_{r(\alpha)t},t\geq0\right).
\eeq
Hence, for every $x>0$,
\beq\label{cvg}
\frac{1}{c} \ \log\left(T^{\theta}_{x\sqrt{c}}\right)&=& \frac{1}{c} \log \left(\inf\left\{t: \theta_{t}>x\sqrt{c}\right\}\right) \nonumber \\
&\stackrel{t=\exp(cs)}{=}& \frac{1}{c} \log \left(\inf\left\{e^{cs}:\frac{1}{\sqrt{c}} \ \theta_{\exp(cs)}>x\right\}\right) \nonumber \\
&=& \inf\left\{s:\frac{1}{\sqrt{c}} \ \theta_{\exp(cs)}>x\right\} \nonumber \\
&\stackrel{c\rightarrow \infty}{\longrightarrow}& \inf\left\{s:B_{r(\alpha)s}>x\right\} \nonumber \\
&=& \inf\left\{s:\sqrt{r(\alpha)}B_{s}>x\right\}\equiv \tau^{(1/2)}_{x/\sqrt{r(\alpha)}} \ .
\eeq
Moreover, from Theorem 7.1 in \cite{Whi80}, we know that the first passage time function mapping is continuous,
thus, we deduce (\ref{res}).
\hfill \QED
\\ \\
If we replace $c$ by $ac$, we can obtain several variants of Proposition \ref{prop1}
for the random times $T^{\theta}_{-bc,ac}$, $0<a,b\leq \infty$, for $c\rightarrow \infty$,
and $a,b>0$ fixed:
\begin{cor}\label{coro1}
The following asymptotic results hold:
\beq
\frac{1}{c} \; \log\left(T^{\theta}_{\sqrt{ac}}\right) &\overset{{(law)}}{\underset{c\rightarrow \infty}\longrightarrow}& \tau^{(1/2)}_{\sqrt{a/r(\alpha)}} \ , \label{cvgvar1} \\
\frac{1}{c} \; \log\left(T^{|\theta|}_{\sqrt{ac}}\right) &\overset{{(law)}}{\underset{c\rightarrow \infty}\longrightarrow}& \tau^{|B|}_{\sqrt{a/r(\alpha)}} \ , \label{cvgvar2} \\
\frac{1}{c} \; \log\left(T^{\theta}_{-\sqrt{bc},\sqrt{ac}}\right) &\overset{{(law)}}{\underset{c\rightarrow \infty}\longrightarrow}& \tau^{B}_{-\sqrt{b/r(\alpha)},\sqrt{a/r(\alpha)}} \ , \label{cvgvar3}
\eeq
where for every $x,y>0$, $\tau^{|B|}_{x}\equiv\inf\{ t:|B_{t}|=x \}$ and
$\tau^{B}_{-y,x}\equiv\inf\{ t:B_{t}\notin(-y,x) \}$.
\end{cor}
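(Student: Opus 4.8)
\noindent\textbf{Proof of Corollary \ref{coro1} (plan).}
The plan is to repeat the scheme of the proof of Proposition \ref{prop1}, replacing the one-sided first-passage functional by the appropriate exit functional in each case. Fix $a,b>0$. By Theorem \ref{SpiBW},
\[
\Theta^{(c)}=(c^{-1/2}\theta_{\exp(ct)},\,t\geq0)\overset{{(d)}}{\underset{c\to\infty}{\Longrightarrow}}(\sqrt{r(\alpha)}\,B_{t},\,t\geq0)
\]
on $D([0,\infty),\mathbb{R})$ endowed with the Skorohod topology. Convergence (\ref{cvgvar1}) requires no new argument: it is Proposition \ref{prop1} read at $x=\sqrt a$, since $T^{\theta}_{\sqrt{ac}}=T^{\theta}_{\sqrt a\,\sqrt c}$ and $\tau^{(1/2)}_{\sqrt a/\sqrt{r(\alpha)}}=\tau^{(1/2)}_{\sqrt{a/r(\alpha)}}$.

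For (\ref{cvgvar2}) and (\ref{cvgvar3}) I would perform the change of clock $t=\exp(cs)$ exactly as in (\ref{cvg}), which rewrites the rescaled logarithms of the exit times as exit functionals of $\Theta^{(c)}$:
\[
\frac{1}{c}\log(T^{|\theta|}_{\sqrt{ac}})=\inf\{\,s:|c^{-1/2}\theta_{\exp(cs)}|\geq\sqrt a\,\}=g_{\sqrt a}(\Theta^{(c)}),
\]
\[
\frac{1}{c}\log(T^{\theta}_{-\sqrt{bc},\sqrt{ac}})=\inf\{\,s:c^{-1/2}\theta_{\exp(cs)}\notin(-\sqrt b,\sqrt a)\,\}=h_{\sqrt b,\sqrt a}(\Theta^{(c)}),
\]
where $g_{y}(w)\equiv\inf\{\,s:|w(s)|\geq y\,\}$ and $h_{y,x}(w)\equiv\inf\{\,s:w(s)\notin(-y,x)\,\}=g^{+}_{x}(w)\wedge g^{-}_{y}(w)$, with $g^{+}_{x}(w)\equiv\inf\{\,s:w(s)\geq x\,\}$ and $g^{-}_{y}(w)\equiv\inf\{\,s:w(s)\leq-y\,\}$. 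I would then apply the continuous mapping theorem. The functional $g^{+}_{x}$ is continuous for the Skorohod topology at every $w\in C([0,\infty),\mathbb{R})$ that reaches $[x,\infty)$ in finite time and, upon doing so, immediately enters $(x,\infty)$ (this is Theorem 7.1 in Whitt \cite{Whi80}, already used in the proof of Proposition \ref{prop1}); the same holds for $g^{-}_{y}$ via the symmetric variant $g^{-}_{y}(w)=g^{+}_{y}(-w)$, and for $g_{y}$ and $h_{y,x}$ either by the two-sided analogue of that statement or, since $h_{y,x}=g^{+}_{x}\wedge g^{-}_{y}$, by composing the joint map $(g^{+}_{x},g^{-}_{y}):D\to\mathbb{R}^{2}$ (continuous at $w$ whenever both coordinates are) with the continuous map $\min$. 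Because the weak limit $\sqrt{r(\alpha)}B$ is a Brownian motion, the regularity of one-point boundaries for $B$ (each level $z$ is regular for $(z,\infty)$ and for $(-\infty,z)$) guarantees that $\sqrt{r(\alpha)}B$ crosses each barrier transversally, hence lies in the relevant continuity set almost surely; the mapping theorem therefore applies and yields
\[
\frac{1}{c}\log(T^{|\theta|}_{\sqrt{ac}})\underset{c\to\infty}{\Longrightarrow}\inf\{\,s:|\sqrt{r(\alpha)}B_{s}|\geq\sqrt a\,\}=\tau^{|B|}_{\sqrt{a/r(\alpha)}},
\]
\[
\frac{1}{c}\log(T^{\theta}_{-\sqrt{bc},\sqrt{ac}})\underset{c\to\infty}{\Longrightarrow}\inf\{\,s:\sqrt{r(\alpha)}B_{s}\notin(-\sqrt b,\sqrt a)\,\}=\tau^{B}_{-\sqrt{b/r(\alpha)},\sqrt{a/r(\alpha)}},
\]
the last equality in each display being the elementary Brownian scaling $\{\sqrt{r(\alpha)}B_{s}\in\mathcal{A}\}=\{B_{s}\in r(\alpha)^{-1/2}\mathcal{A}\}$ (together, in the first line, with continuity of $B$, which lets one replace $\geq$ by $=$).

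The step I expect to be the main obstacle is precisely this continuity of the exit functionals together with the almost sure membership of the limit in the continuity set. For a one-sided passage it is Whitt's Theorem 7.1; the subtlety for the symmetric barrier $g_{y}$ and the interval $h_{y,x}$ is that a merely continuous limiting path could, in principle, touch a barrier and return, in which case the exit time would fail to be a continuous function of the path, so one cannot argue pathwise for all $w\in C$ and must invoke a genuine property of the limiting process. Here that property is the regularity of one-point boundaries for Brownian motion: after hitting a level, $\sqrt{r(\alpha)}B$ immediately strictly crosses it, so with probability one its first exit from $(-y,x)$ coincides with its first entrance into $(-\infty,-y)\cup(x,\infty)$ (and into its closure), which is exactly the continuity criterion. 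Everything else — the time change $t=e^{cs}$ and the identification of the limits by Brownian scaling — is routine and parallels (\ref{cvg}).
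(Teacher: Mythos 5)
Your proposal is correct and follows essentially the same route as the paper, which states the corollary as a direct variant of Proposition \ref{prop1}: the time change $t=e^{cs}$, Theorem \ref{SpiBW}, and the continuity of first-passage/exit functionals (Whitt's Theorem 7.1) applied to the limiting Brownian path. Your extra care about the two-sided exit functional and the a.s.\ membership of $\sqrt{r(\alpha)}B$ in its continuity set (via regularity of levels for Brownian motion) is exactly the point the paper leaves implicit, and it is handled correctly.
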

\begin{prop}\label{propTtheta}
The following asymptotic result for $\alpha\in(0,2)$, holds: for every $b>0$,
\beq\label{resTtheta}
P\left(T^{\theta}_{b\sqrt{\log t}}>t\right)\stackrel{t\rightarrow\infty}{\longrightarrow}\mathrm{erf}\left(\frac{b}{\sqrt{2r(\alpha)}}\right) \ ,
\eeq
where $\mathrm{erf}(x)\equiv \frac{2}{\sqrt{\pi}}\int^{x}_{0} e^{-y^{2}} dy$ is the error function.
\end{prop}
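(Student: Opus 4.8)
The plan is to convert the statement about the first passage time $T^\theta_{b\sqrt{\log t}}$ into a statement about the winding process $\theta$ evaluated at a fixed time, and then apply Theorem \ref{SpiBW}. The key observation is the duality between first passage times and suprema: for the cadlag process $\theta$, the event $\{T^\theta_{b\sqrt{\log t}} > t\}$ says that $\theta$ has not yet reached the level $b\sqrt{\log t}$ by time $t$, i.e. $\sup_{0\leq s\leq t}\theta_s < b\sqrt{\log t}$ (up to the usual care about whether the level is attained, which is immaterial in the limit since the limiting Brownian supremum has a continuous distribution). So I would first write
\beq\label{pf:dual}
P\left(T^\theta_{b\sqrt{\log t}}>t\right)=P\left(\sup_{0\leq s\leq t}\theta_s<b\sqrt{\log t}\right).
\eeq

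Next I would introduce the scaling $c=\log t$, so that $t=e^{c}$ and $\sup_{0\leq s\leq t}\theta_s = \sup_{0\leq r\leq 1}\theta_{\exp(cr)}$. Dividing by $\sqrt{c}=\sqrt{\log t}$, the right-hand side of \eqref{pf:dual} becomes
\beq\label{pf:rescaled}
P\left(\sup_{0\leq r\leq 1}\frac{\theta_{\exp(cr)}}{\sqrt{c}}<b\right)=P\left(\sup_{0\leq r\leq 1}\Theta^{(c)}_r<b\right),
\eeq
with $\Theta^{(c)}$ the rescaled process of Theorem \ref{SpiBW}. By that theorem, $\Theta^{(c)}\Longrightarrow(\sqrt{r(\alpha)}\,B_r,r\geq0)$ in $D([0,\infty),\mathbb{R})$ with the Skorohod topology as $c\to\infty$ (equivalently $t\to\infty$). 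Since the supremum functional $f\mapsto \sup_{0\leq r\leq 1}f(r)$ is continuous on $D([0,1],\mathbb{R})$ at every function that is continuous at $r=1$ — and the Brownian limit is a.s. continuous — the continuous mapping theorem gives $\sup_{0\leq r\leq 1}\Theta^{(c)}_r \Rightarrow \sqrt{r(\alpha)}\sup_{0\leq r\leq 1}B_r$. Because $\sqrt{r(\alpha)}\sup_{0\leq r\leq1}B_r$ has a continuous (indeed everywhere positive-density on $(0,\infty)$) distribution function, the weak convergence upgrades to convergence of the distribution functions at the point $b$:
\beq\label{pf:limit}
P\left(\sup_{0\leq r\leq 1}\Theta^{(c)}_r<b\right)\stackrel{c\to\infty}{\longrightarrow} P\left(\sqrt{r(\alpha)}\sup_{0\leq r\leq 1}B_r< b\right)=P\left(\sup_{0\leq r\leq 1}B_r<\frac{b}{\sqrt{r(\alpha)}}\right).
\eeq

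Finally I would evaluate the last probability by the reflection principle: $\sup_{0\leq r\leq1}B_r \stackrel{(law)}{=}|B_1|$, so $P(\sup_{0\leq r\leq1}B_r < a)=P(|B_1|<a)=\mathrm{erf}(a/\sqrt2)$ for $a>0$, where $B_1$ is standard normal. Taking $a=b/\sqrt{r(\alpha)}$ yields $\mathrm{erf}(b/\sqrt{2r(\alpha)})$, which is \eqref{resTtheta}. The one point that needs a little care — and which I expect to be the main (minor) obstacle — is the passage from \eqref{pf:dual} to \eqref{pf:limit}: one must justify that the supremum over the fixed window $[0,1]$ (rather than over an unbounded interval) is a continuous functional on the relevant subset of Skorohod space and that the strict-versus-weak inequality in the level-crossing event does not affect the limit. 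Both are standard because the limiting process is continuous and its running maximum has no atoms; alternatively one can invoke the continuity of the first-passage-time map (Theorem 7.1 in \cite{Whi80}), exactly as in the proof of Proposition \ref{prop1}, to transfer the convergence directly to $c^{-1}\log T^\theta_{b\sqrt c}$ and then read off the distribution of the limit $\tau^{(1/2)}_{b/\sqrt{r(\alpha)}}$ at time $1$, using $P(\tau^{(1/2)}_{a}\leq 1)=P(\sup_{[0,1]}B\geq a)$.
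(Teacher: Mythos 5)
Your proposal is correct and follows essentially the same route as the paper: rewrite $\{T^{\theta}_{b\sqrt{\log t}}>t\}$ as a supremum event, rescale with $c=\log t$, apply Theorem \ref{SpiBW} together with the continuity of the supremum functional, and identify $\sup_{[0,1]}B\stackrel{(law)}{=}|B_1|$ to get the error function. The extra care you take with the continuous mapping argument and the absence of atoms only makes explicit what the paper leaves implicit.
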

\noindent{\textbf{Proof of Proposition \ref{propTtheta}:}} \\
Using the notation of Theorem \ref{SpiBW}, for every $b>0$, we have:
\beqq
P\left(T^{\theta}_{b\sqrt{\log t}}>t\right)&=& P\left(\sup_{u\leq t} \theta_{u}<b\sqrt{\log t}\right)\stackrel{u=t^{v}}{=}
P\left(\sup_{v\leq 1 } (\log t)^{-1/2} \theta_{t^{v}}<b \right) \\
&\stackrel{t=e^{c}}{=}& P\left(\sup_{v\leq 1 }  c^{-1/2} \theta_{\exp(cv)} <b \right)
\eeqq
Hence, using Theorem \ref{SpiBW} for $t\rightarrow\infty$,
we deduce:
\beqq
P\left(T^{\theta}_{b\sqrt{\log t}}>t\right) &\stackrel{t\rightarrow\infty}{\longrightarrow}&
P\left(\sup_{v\leq 1} \sqrt{r(\alpha)}B_{v}<b\right)=P\left(|B_{1}|<\frac{b}{\sqrt{r(\alpha)}}\right) \\
&=& 2\int^{b/\sqrt{r(\alpha)}}_{0} \frac{dw}{\sqrt{2\pi}} \ e^{-w^{2}/2} \ ,
\eeqq
and changing the variables $w=y\sqrt{2}$, we obtain (\ref{resTtheta}).
\hfill \QED
\\ \\
As mentioned in \cite{BeW96}, because an isotropic stable L\'{e}vy process $Z$
is transient, the difference between $\theta$ and the winding number around an arbitrary fixed $z\neq1$
is bounded and converges as $t\rightarrow \infty$.
Hence, with $\left(\theta^{i}_{t},t>0\right)$, $1\leq i\leq n$ denoting the continuous total angle wound of
$Z$ of index $\alpha\in(0,2)$ around $z^{i}$ ($z^{1},\ldots,z^{n}$ are $n$ distinct points in the complex plane $\mathbb{C}$) up to time $t$,
we obtain the following concerning the finite dimensional distributions
(windings around several points):
\begin{prop}\label{finidim}
For isotropic stable L\'{e}vy processes of index $\alpha\in(0,2)$, we have:
\beq
\left(\frac{\theta^{i}_{t}}{\sqrt{\log t}},1\leq i\leq n\right) \overset{{(d)}}{\underset{t\rightarrow \infty}\Longrightarrow}
\left(\sqrt{r(\alpha)}B^{i}_{1},1\leq i\leq n\right),
\eeq
where $\left(B^{i}_{s},1\leq i\leq n,s\geq0\right)$ is an $n$-dimensional Brownian motion and
$r(\alpha)$ is given by (\ref{cst}).
\end{prop}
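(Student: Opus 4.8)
\textbf{Proof proposal for Proposition \ref{finidim}.} The plan is to deduce the $n$-point limit from the one-point theorem of Bertoin and Werner (Theorem \ref{SpiBW}), using the fact recalled just before the statement: the winding number of $Z$ around a fixed point differs from the winding number around $0$ by an a.s.\ bounded, a.s.\ convergent amount, so on the scale $\sqrt{\log t}$ this error disappears and the whole vector $(\theta^i_t/\sqrt{\log t})_{1\le i\le n}$ is driven by a \emph{single} one-dimensional Gaussian limit.

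First I would make each $\theta^i$ precise, exactly as $\theta$ was constructed in Section \ref{secpre}: since $Z$ a.s.\ never visits single points and, for every fixed $z^i$, a.s.\ none of the (countably many) jump chords of $Z$ passes through $z^i$, the curve $f:[0,1]\to\mathbb{C}$ obtained by filling the jumps of $Z$ on $[0,t]$ with line segments avoids $z^i$; $\theta^i$ is then the cadlag continuous determination of $\arg(f(\cdot)-z^i)$ along $f$. It is essential to use the \emph{same} curve $f$ for $\theta$ and for every $\theta^i$, because then $R^i:=\theta^i-\theta$ is a continuous determination of $\arg\!\bigl(1-z^i/f(\cdot)\bigr)$, which is exactly the quantity analysed in \cite{BeW96}: transience of $Z$ ($|Z_t|\to\infty$ a.s., so $1-z^i/f(\cdot)$ is eventually confined to a small neighbourhood of $1$ not surrounding $0$) forces $R^i_t$ to converge a.s.\ to a finite random limit as $t\to\infty$. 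In particular $R^i_t/\sqrt{\log t}\to0$ a.s., so the $\mathbb{R}^n$-valued error vector $(R^i_t/\sqrt{\log t})_{1\le i\le n}$ tends to $0$ a.s.

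Next, let $B$ be a real-valued Brownian motion for which $\theta_t/\sqrt{\log t}\overset{(d)}{\longrightarrow}\sqrt{r(\alpha)}\,B_1$ — the one-dimensional marginal of Theorem \ref{SpiBW} (set $c=\log t$ and read the limiting process at time $1$). Then, for $1\le i\le n$,
\[
\frac{\theta^i_t}{\sqrt{\log t}}=\frac{\theta_t}{\sqrt{\log t}}+\frac{R^i_t}{\sqrt{\log t}},
\]
and, since the first term converges in law while the error vector tends to $0$ in probability, the continuous mapping theorem (for $x\mapsto x(1,\dots,1)$) together with the converging-together (Slutsky) lemma in $\mathbb{R}^n$ gives
\[
\left(\frac{\theta^i_t}{\sqrt{\log t}},\ 1\le i\le n\right)\overset{(d)}{\underset{t\to\infty}{\Longrightarrow}}\sqrt{r(\alpha)}\,B_1\,(1,\dots,1)=\left(\sqrt{r(\alpha)}\,B^i_1,\ 1\le i\le n\right)
\]
with $B^1=\dots=B^n=B$, which is the claimed convergence. (In accordance with the boundedness of the winding differences quoted above, the limiting Gaussian vector is degenerate, all of its coordinates coinciding.)

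I do not expect a serious obstacle once the two ingredients — Theorem \ref{SpiBW} and the a.s.\ convergence of the $R^i$ — are available. The delicate point is the bookkeeping in the second paragraph: one must fill the jumps of $Z$ with the \emph{same} chords when defining $\theta,\theta^1,\dots,\theta^n$ so that $R^i$ is genuinely the continued argument of $1-z^i/f(\cdot)$ and the convergence result of \cite{BeW96} transfers verbatim. Should one also want weak convergence of the $n$ winding \emph{processes} on $D([0,\infty),\mathbb{R}^n)$, it suffices to replace Theorem \ref{SpiBW} by its process version and to use in addition that $\sup_{s\ge0}|R^i_{\exp(cs)}|<\infty$ a.s., which is immediate from the a.s.\ convergence of $R^i$.
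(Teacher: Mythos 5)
Your argument is correct, and since the paper supplies no proof of Proposition~\ref{finidim} --- it presents the statement as an immediate consequence of the remark about bounded, a.s.\ convergent winding differences together with Theorem~\ref{SpiBW} --- your reasoning is exactly the implicit one. You are also right to flag that the limiting vector must be degenerate, every coordinate equalling $\sqrt{r(\alpha)}\,B_1$ for a single one-dimensional Brownian motion $B$: the a.s.\ convergence of each $R^i=\theta^i-\theta$ forces $(\theta^i_t-\theta^j_t)/\sqrt{\log t}\to 0$ a.s., which rules out any non-degenerate joint Gaussian limit. If the notation $(B^i_s)$ in the statement were read in the usual sense of an $n$-dimensional Brownian motion with independent coordinates, the claimed limit $(\sqrt{r(\alpha)}B^i_1)$ would have i.i.d.\ $N(0,r(\alpha))$ entries, which contradicts the very remark the proposition is built on; your reading $B^1=\cdots=B^n$ is the only one compatible with both. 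Finally, your care in using the same filled-in path $f$ for all $n$ winding numbers --- so that $R^i$ really is the continuous determination of $\arg(1-z^i/f)$ along that path, and the jump chords a.s.\ miss $z^i$ because the isotropic L\'{e}vy measure assigns no mass to rays --- is precisely the bookkeeping needed to make this deduction rigorous.
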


\section{Small time asymptotics}\label{smallt}
We turn now our study to the behaviour of $\theta_{t}$ for $t\rightarrow0$.
\begin{theo}\label{DV}
For $\alpha\in(0,2)$, the following convergence in law holds:
\beq\label{maintsmall}
\left(t^{-1/\alpha}\rho_{ts},s\geq0\right)\overset{{(d)}}{\underset{t\rightarrow0}\Longrightarrow} \left(\zeta_{s}, s\geq0\right) \ ,
\eeq
where $\left(\zeta_{s}, s\geq0\right)$ is a symmetric 1-dimensional $\alpha$-stable process and
the convergence in distribution is considered on $D(\left[\right.0,\infty\left.\right),\mathbb{R})$
endowed with the Skorohod topology.
\end{theo}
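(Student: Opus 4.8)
The plan is to argue directly on the symmetric L\'evy process $\rho$ through its L\'evy measure $\mu$, which Lemma \ref{lemma2} describes explicitly: writing $\nu(dx)=c_{\alpha}|x|^{-2-\alpha}\,dx$ for the L\'evy measure of $Z$ (with $c_{\alpha}$ the density constant appearing in (\ref{constalpha})), $\mu$ is the image of $\nu$ under $z\mapsto\phi(1+z)$. Since $\rho$ is symmetric with no Gaussian part, its characteristic exponent is $\Psi(u)=\int_{\mathbb{R}}(1-\cos ux)\,\mu(dx)$, so the scaled process $\rho^{(t)}_{s}:=t^{-1/\alpha}\rho_{ts}$ satisfies $E[\exp(iu\rho^{(t)}_{s})]=\exp(-s\,t\,\Psi(t^{-1/\alpha}u))$. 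Hence it suffices to prove that $t\,\Psi(t^{-1/\alpha}u)\to c|u|^{\alpha}$ as $t\to0$, for every $u\in\mathbb{R}$ and some $c=c(\alpha)>0$: the right-hand side is the characteristic exponent of a symmetric $\alpha$-stable process $\zeta$, L\'evy's continuity theorem then gives convergence of the finite-dimensional distributions, and — exactly as in the proof of Theorem \ref{SpiBW}, since all processes involved are L\'evy processes — this upgrades to weak convergence on $D([0,\infty),\mathbb{R})$ with the Skorohod topology (see \cite{Sko91} and Corollary 3.6 of Chapter VII in \cite{JaS03}).

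The core of the argument is the behaviour of $\mu$ near $0$. By Lemma \ref{lemma2}, for small $\lambda>0$,
\[
\mu(\{|y|>\lambda\})=\nu(\{z:|\phi(1+z)|>\lambda\})=\nu(\mathbb{C}\setminus W_{\lambda}),\qquad W_{\lambda}:=\{z:|\phi(1+z)|\le\lambda\},
\]
where $W_{\lambda}$ is the wedge with vertex $-1$, half-angle $\lambda$, opening along $[-1,\infty)$. Now $\nu$ has infinite total mass only because of the singularity at $z=0$ (the integral $\int|x|^{-2-\alpha}dx$ diverges at the origin and converges at infinity), so I fix $\varepsilon>0$ and split the region accordingly. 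On $\{|z|\ge\varepsilon\}$ the contribution is $\nu(\{|z|\ge\varepsilon\}\setminus W_{\lambda})\to\nu(\{|z|\ge\varepsilon\}\setminus[-1,\infty))<\infty$, a bounded quantity as $\lambda\to0$. On $\{|z|<\varepsilon\}$, using $\phi(1+z)=\mathrm{Im}\,z+O(|z|^{2})$ and the change of variables $z=\lambda w$,
\[
\nu(\{|z|<\varepsilon\}\setminus W_{\lambda})=c_{\alpha}\int_{\{|z|<\varepsilon,\,|\phi(1+z)|>\lambda\}}|z|^{-2-\alpha}\,dz=\lambda^{-\alpha}c_{\alpha}\int_{E_{\lambda}}|w|^{-2-\alpha}\,dw,
\]
where the indicators of $E_{\lambda}$ converge a.e., as $\lambda\to0$, to that of $\{w:|\mathrm{Im}\,w|>1\}$, on which $|w|^{-2-\alpha}$ is integrable (no singularity at $0$, and $-2-\alpha<-2$ controls infinity), so the last integral tends to $c_{\alpha}\int_{\{|\mathrm{Im}\,w|>1\}}|w|^{-2-\alpha}\,dw$ by dominated convergence. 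Since $\lambda^{-\alpha}\to\infty$ dwarfs the bounded $\{|z|\ge\varepsilon\}$-term, this gives
\[
\mu(\{|y|>\lambda\})\sim C\lambda^{-\alpha}\quad(\lambda\to0^{+}),\qquad C=c_{\alpha}\int_{\{|\mathrm{Im}\,w|>1\}}|w|^{-2-\alpha}\,dw,
\]
and by symmetry $\mu(\{y>\lambda\})=\mu(\{y<-\lambda\})\sim\tfrac12 C\lambda^{-\alpha}$.

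Having established that the L\'evy tail of $\rho$ is regularly varying at $0$ with index $-\alpha$, the limit $t\,\Psi(t^{-1/\alpha}u)\to c|u|^{\alpha}$ follows by a standard Abelian computation: with $\lambda=t^{-1/\alpha}\to\infty$ one has $t\,\Psi(t^{-1/\alpha}u)=\lambda^{-\alpha}\int(1-\cos\lambda ux)\,\mu(dx)$; the contribution of $\{|x|>\delta\}$ is $O(\lambda^{-\alpha})\to0$, while on $\{|x|\le\delta\}$ one substitutes $x=y/\lambda$ and uses $\bar\mu(r):=\mu(\{|x|>r\})\sim Cr^{-\alpha}$ together with dominated convergence (the bound $1-\cos uy=O(y^{2}\wedge1)$ handling the small-$y$ part) to obtain $\lambda^{-\alpha}\Psi(\lambda u)\to c|u|^{\alpha}$ with $c$ a universal multiple of $C$; equivalently one may simply quote the classical criterion for convergence of L\'evy processes to a stable limit under a regularly-varying-tail condition. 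This identifies $\zeta$ as a symmetric $\alpha$-stable process and, with the reduction of the first paragraph, completes the proof of (\ref{maintsmall}).

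I expect the wedge estimate of the second paragraph to be the only genuinely delicate point: one must make the heuristic $\{|\phi(1+z)|>\lambda\}\approx\{|\mathrm{Im}\,z|>\lambda\}$ precise by controlling the error in $\phi(1+z)=\mathrm{Im}\,z+O(|z|^{2})$ uniformly enough that the rescaled region $E_{\lambda}$ indeed converges to $\{|\mathrm{Im}\,w|>1\}$, and one must check that the unboundedness of the preimage $\mathbb{C}\setminus W_{\lambda}$ is harmless — which it is, precisely because $\nu$ restricted away from $0$ is finite. As a consistency check (and an alternative route, for $\theta$ rather than $\rho$), note that $\theta_{t}=\phi(Z_{t})=\phi(1+Z^{(0)}_{t})$ with $Z^{(0)}$ an isotropic $\alpha$-stable process started at $0$, so for $t\to0$ one has $\phi(1+Z^{(0)}_{t})\approx\mathrm{Im}\,Z^{(0)}_{t}$; the projection $\mathrm{Im}\,Z^{(0)}$ is a symmetric $1$-dimensional $\alpha$-stable process by isotropy, self-similarity gives $t^{-1/\alpha}\mathrm{Im}\,Z^{(0)}_{ts}\stackrel{(law)}{=}\mathrm{Im}\,Z^{(0)}_{s}$, and $H_{t}/t\to1$ a.s. as $t\to0$ transfers this to $\theta$ and hence to $\rho$, suggesting that $\zeta$ may be taken to be $\mathrm{Im}\,Z^{(0)}$.
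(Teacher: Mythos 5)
Your proof is correct, and its core is the same as the paper's: both arguments rest on showing that the L\'evy measure $\mu$ of $\rho$ (the image of $\nu$ under $z\mapsto\phi(1+z)$, by Lemma \ref{lemma2}) has $\alpha$-stable-like behaviour near the origin, and then upgrading to functional convergence by standard L\'evy-process limit theory. The difference is in execution. The paper computes the density of $\mu$ explicitly: passing to polar coordinates centred so that $\varphi=\phi(1+z)$, it evaluates the radial integral in terms of an incomplete Beta function and reads off $\tilde{\pi}(d\varphi)\sim\tilde{L}\,|\varphi|^{-1-\alpha}d\varphi$ as $\varphi\to0$, then concludes "by standard arguments". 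You instead establish the equivalent tail statement $\mu(\{|y|>\lambda\})\sim C\lambda^{-\alpha}$ by a scaling and dominated-convergence argument, which avoids the special-function computation, and you are more explicit than the paper about what the standard arguments are (convergence of $t\,\Psi(t^{-1/\alpha}u)$ to a symmetric $\alpha$-stable exponent, hence of the one-dimensional laws, hence functional convergence via \cite{Sko91} or Corollary VII.3.6 of \cite{JaS03} --- exactly the mechanism already used in step (ii) of the proof of Theorem \ref{SpiBW}). The one point you flag but leave open, the domination needed for the wedge estimate, closes easily: for $|z|\le 1/2$ one has $|\phi(1+z)|\le |\mathrm{Im}\,z|/(1+\mathrm{Re}\,z)\le 2|\mathrm{Im}\,z|$, so $E_{\lambda}\subset\{w:|\mathrm{Im}\,w|>1/2\}$, on which $|w|^{-2-\alpha}$ is integrable; hence dominated convergence applies and there is no genuine gap. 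Your closing consistency check ($\theta_t\approx\mathrm{Im}\,Z^{(0)}_t$ for small $t$, transferred through $H_t/t\to1$) is a nice complementary heuristic that also identifies the limit $\zeta$ concretely, though as written it is an argument about $\theta$ rather than $\rho$ and would need Theorem \ref{theoclock} to be inverted to serve as a substitute proof.
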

\noindent{\textbf{Proof of Theorem \ref{DV}:}} \\
From Lemma \ref{lemma2}, we use the L\'{e}vy measure, say $\tilde{\pi}$, of $\theta_{A(\cdot)}$ (thus the L\'{e}vy measure of $\rho$)
and we prove that for $t\rightarrow0$ it converges to the L\'{e}vy measure of a 1-dimensional $\alpha$-stable process.
Indeed, with $$L\equiv\frac{\alpha \ 2^{-1+\alpha/2} \Gamma(1+\alpha/2)}{\pi\Gamma(1-\alpha/2)},$$
and $z$ denoting a number in $\mathbb{C}$, using polar coordinates, we have:
\beqq
\phi\left(1+z\right)=\int_{\mathbb{C}} \frac{dz}{1+z}=
2L \int^{\pi}_{0}\int^{\infty}_{0} \frac{r \ dr \ d\varphi}{\left(1+r^{2}-2r\cos\varphi\right)^{1+\alpha/2}} \ .
\eeqq
We remark that: $$1+r^{2}-2r\cos\varphi=\left(r-\cos \varphi\right)^{2}+\sin^{2}\varphi \ ,$$
hence, changing the variables $\left(r-\cos \varphi\right)^{2}= t^{-1} \sin^{2}\varphi$
and denoting by: $$B(y;a,b)=\int^{y}_{0}u^{a-1}(1-u)^{b-1} du \ ,$$ the incomplete Beta function,
for $\varphi>0$ (we can repeat the same arguments for $\varphi<0$) we have:
\beq
\tilde{\pi}(d\varphi)&=& d\varphi \ 2L \int^{\infty}_{0} \frac{r \ dr }{\left(\left(r-\cos \varphi\right)^{2}+\sin^{2}\varphi\right)^{1+\alpha/2}} \nonumber \\
&=& d\varphi \ \frac{2L}{2} \left(\frac{2}{\alpha}+\cos\varphi \left(1-\cos^{2}\varphi\right)^{-\frac{1}{2}-\frac{\alpha}{2}}
\int^{1-\frac{1}{\cos^{2}\varphi}}_{0} t^{-\frac{1}{2}+\frac{\alpha}{2}}\left(1+t\right)^{-\frac{\alpha}{2}-1} dt\right) \nonumber \\
&\stackrel{u=-t}{=}& d\varphi \ L \left(\frac{2}{\alpha}+\cos\varphi \left(-1+\cos^{2}\varphi\right)^{-\frac{1}{2}-\frac{\alpha}{2}}
\int^{1-\frac{1}{\cos^{2}\varphi}}_{0} u^{-\frac{1}{2}+\frac{\alpha}{2}}\left(1-u\right)^{-\frac{\alpha}{2}-1} du\right) \nonumber \\
&=& d\varphi \ L \left(\frac{2}{\alpha}+ \cos\varphi \left(-1+\cos^{2}\varphi\right)^{-\frac{1}{2}-\frac{\alpha}{2}} B\left(1-\frac{1}{\cos^{2}\varphi};\frac{1}{2}+\frac{\alpha}{2},-\frac{\alpha}{2}\right)\right) \nonumber \\
&\stackrel{\varphi\sim0}{\thicksim}& \tilde{L}\varphi^{-1-\alpha} \ d\varphi \ , \label{varphismall}
\eeq
which is the L\'{e}vy measure of an $\alpha$-stable process.
The result now follows by standard arguments.
\hfill \QED
\\ \\
Concerning the clock $H$ and its increments, we have:
\begin{theo}\label{theoclock}
The following a.s. convergence holds:
\beq\label{clocktsmall}
\left(\frac{H(ux)}{u}, x\geq 0\right) \overset{{a.s.}}{\underset{u\rightarrow0}\longrightarrow} \left(x, x\geq0\right) \ .
\eeq
\end{theo}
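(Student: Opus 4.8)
The plan is to reduce (\ref{clocktsmall}) to the elementary fact that the integrand $s\mapsto|Z_{s}|^{-\alpha}$ appearing in (\ref{basicclock}) is, almost surely, right-continuous at $s=0$ with value $|Z_{0}|^{-\alpha}=1$, and that it stays bounded on a neighbourhood of the origin. Indeed, $Z$ is cadlag with $Z_{0}=1$, and $\{0\}$ is polar for $Z$ (so $|Z_{s}|>0$ for all $s$ a.s. and $H$ is well defined); hence for almost every $\omega$ there is a random $\delta>0$ with $|Z_{s}|\in(1/2,2)$, say, for all $s\in[0,\delta]$, and $|Z_{s}|^{-\alpha}\to 1$ as $s\downarrow 0$. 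The claimed convergence is then just a uniform-on-compacts version of the fundamental theorem of calculus near the origin.

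First I would fix $T>0$ and write, for $x\in[0,T]$,
\[
\frac{H(ux)}{u}-x=\frac{1}{u}\int_{0}^{ux}\bigl(|Z_{s}|^{-\alpha}-1\bigr)\,ds ,
\]
which immediately gives the bound
\[
\sup_{0\le x\le T}\left|\frac{H(ux)}{u}-x\right|\le\frac{1}{u}\int_{0}^{uT}\bigl||Z_{s}|^{-\alpha}-1\bigr|\,ds .
\]
Then, using the right-continuity noted above, for every $\eps>0$ choose the random $\delta>0$ so that $\bigl||Z_{s}|^{-\alpha}-1\bigr|<\eps$ on $[0,\delta]$; as soon as $u$ is small enough that $uT\le\delta$, the right-hand side of the last display is at most $\eps T$. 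Letting $u\to 0$ and then $\eps\to 0$ yields
\[
\sup_{0\le x\le T}\left|\frac{H(ux)}{u}-x\right|\overset{a.s.}{\underset{u\rightarrow0}\longrightarrow}0 .
\]

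Since $T>0$ is arbitrary, this establishes that $\left(u^{-1}H(ux),x\ge0\right)$ converges a.s. to $\left(x,x\ge0\right)$ uniformly on compact $x$-intervals, and because the limiting path is continuous this is precisely the a.s. convergence in $D([0,\infty),\mathbb{R})$ for the Skorohod topology asserted in the statement. I do not expect any genuine obstacle: the only point needing care is the control of $|Z_{s}|^{-\alpha}$ near $s=0$, which follows from $Z_{0}=1\neq 0$ together with the cadlag property and the polarity of the origin; one could alternatively invoke the Lebesgue differentiation theorem, but right-continuity of the integrand at the single point $0$ already suffices, and it also gives the uniformity in $x$ for free.
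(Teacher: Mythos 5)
Your proposal is correct and follows essentially the same route as the paper: the same decomposition $\frac{H(ux)}{u}-x=\frac{1}{u}\int_{0}^{ux}\left(|Z_{s}|^{-\alpha}-1\right)ds$, the same supremum bound over a compact $x$-interval, and the same reduction to $|Z_{s}|^{-\alpha}\rightarrow 1$ a.s.\ as $s\rightarrow 0$ (the paper passes through the change of variables $s=uw$ and lets the integrand tend to $0$, while you argue directly with a random $\delta$, which makes explicit the boundedness the paper uses implicitly). No gap; the argument is sound.
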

\noindent{\textbf{Proof of Theorem \ref{theoclock}:}} \\
From the definition of the clock $H$ we have:
\beqq
\frac{H(ux)}{u}=\frac{1}{u} \int^{ux}_{0} \frac{ds}{|Z_{s}|^{\alpha}} \ .
\eeqq
Hence, for every $x_{0}>0$, we have:
\beq\label{prsmall1}
\sup_{x\leq x_{0}} \left|\frac{H(ux)-ux}{u}\right|&=&
\sup_{x\leq x_{0}} \frac{1}{u}\left|\int^{ux}_{0} \left(\frac{1}{|Z_{s}|^{\alpha}}-1\right) ds \ \right|
\leq \frac{1}{u}\int^{ux_{0}}_{0} \left|\frac{1}{|Z_{s}|^{\alpha}}-1\right| ds \nonumber \\
&\stackrel{s=uw}{=}& \int^{x_{0}}_{0} \left|\frac{1}{|Z_{uw}|^{\alpha}}-1\right| dw \
\overset{{a.s.}}{\underset{u\rightarrow0}\longrightarrow} 0 \ .
\eeq
because:
\beq
|Z_{u}|^{\alpha}\overset{{a.s.}}{\underset{u\rightarrow0}\longrightarrow} 1 \ .
\eeq
Thus, as (\ref{prsmall1}) is true for every $x_{0}>0$, we obtain (\ref{clocktsmall}).
\hfill \QED
\begin{rem}\label{remclocklarge}
We remark that this behaviour of the clock is different for the case $t\rightarrow\infty$,
where (\ref{prlarge1}) can be equivalently stated as:
\beq\label{clocktlargebis}
\left(\frac{H(ux)}{\log u}, x\geq 0\right) \overset{{(d)}}{\underset{u\rightarrow\infty}\Longrightarrow} \left(2^{-\alpha} \frac{\Gamma(1-\alpha/2)}{\Gamma(1+\alpha/2)} \ x, x\geq0\right) .
\eeq
\end{rem}
\noindent{Using Theorems \ref{DV} and \ref{theoclock}, we obtain:}
\begin{theo}\label{DV2}
With $\alpha\in(0,2)$, the family of processes $$\left(c^{-1/\alpha}\theta_{ct},t\geq0\right)$$
converges in distribution on $D(\left[\right.0,\infty\left.\right),\mathbb{R})$
endowed with the Skorohod topology, as $c\rightarrow0$, to a symmetric 1-dimensional $\alpha$-stable process
$\left(\zeta_{t},t\geq0\right)$.
\end{theo}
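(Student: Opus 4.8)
The plan is to combine the two convergence results already established — the convergence in law of the rescaled angular L\'evy process $\rho$ from Theorem \ref{DV} and the almost sure convergence of the rescaled clock $H$ from Theorem \ref{theoclock} — via the skew-product identity $\theta_{t}=\rho(H_{t})$ from \eqref{skew-product2}, exactly as in the new proof of Theorem \ref{SpiBW} but now letting the time parameter tend to $0$. Concretely, I would first write, using \eqref{skew-product2} and the scaling of the stable subordinator clock,
\beqq
c^{-1/\alpha}\theta_{ct}=c^{-1/\alpha}\rho\bigl(H(ct)\bigr)=c^{-1/\alpha}\rho\Bigl(c\cdot\tfrac{H(ct)}{c}\Bigr),
\eeqq
so that the left-hand side is the composition of the rescaled process $s\mapsto c^{-1/\alpha}\rho(cs)$ with the rescaled clock $x\mapsto H(cx)/c$. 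By Theorem \ref{DV} (with $t$ there replaced by $c$) the first factor converges in distribution on $D([0,\infty),\mathbb{R})$, endowed with the Skorohod topology, to the symmetric $\alpha$-stable process $(\zeta_{s},s\geq0)$; by Theorem \ref{theoclock} the second factor converges almost surely, locally uniformly, to the identity map $(x,x\geq0)$, which in particular lies in $C([0,\infty),[0,\infty))\cap D_{0}([0,\infty),[0,\infty))$ (it is continuous and strictly increasing).

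Next I would invoke the continuity of the composition map: Theorem 3.1 of Whitt \cite{Whi80} states that composition on $D([0,\infty),\mathbb{R})\times D([0,\infty),[0,\infty))$ is continuous at every pair $(\eta,\beta)$ with $\beta$ continuous and non-decreasing (i.e.\ $\beta\in C\cap D_{0}$). Since the limiting clock is the identity, which is continuous and increasing, the hypotheses are met, and the continuous mapping theorem yields
\beqq
\Bigl(c^{-1/\alpha}\rho\bigl(c\cdot\tfrac{H(ct)}{c}\bigr),t\geq0\Bigr)\overset{(d)}{\underset{c\to0}\Longrightarrow}\bigl(\zeta_{t},t\geq0\bigr),
\eeqq
which is the claimed statement since $\zeta$ composed with the identity is $\zeta$ itself.

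The one point needing care — and the place where the argument is not quite a verbatim copy of the proof of Theorem \ref{SpiBW} — is the \emph{joint} convergence of the pair (rescaled $\rho$, rescaled $H$) on the product space, which is what the continuous mapping theorem actually requires. In the large-time proof this was handled implicitly; here I would make it explicit by noting that the clock convergence in Theorem \ref{theoclock} is almost sure (hence convergence in probability to a deterministic limit), so the pair converges jointly in distribution to $(\zeta,\mathrm{id})$ regardless of any dependence between $\rho$ and $H$ — a deterministic limit in one coordinate automatically gives joint convergence from marginal convergence. This is the main (mild) obstacle; once it is dispatched, the rest is a direct application of Whitt's continuity theorem and the continuous mapping theorem. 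I would also remark that, as in Theorem \ref{SpiBW}, the discontinuity set of the composition map is a $\zeta$-null set because the limiting clock is continuous, so no measurability or boundary issue arises.
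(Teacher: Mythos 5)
Your proposal is correct and follows essentially the same route as the paper: the skew-product identity $\theta_{ct}=\rho(H(ct))$, the weak convergence of $c^{-1/\alpha}\rho(c\,\cdot)$ from Theorem \ref{DV}, the a.s. convergence of the rescaled clock from Theorem \ref{theoclock}, and Whitt's continuity of the composition map at a continuous, strictly increasing limit clock. Your explicit remark on joint convergence (marginal convergence plus a deterministic limit in one coordinate) is a welcome clarification of a step the paper leaves implicit, but it does not change the argument.
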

\noindent{\textbf{Proof of Theorem \ref{DV2}:}} \\
We will use Theorems \ref{DV} and \ref{theoclock}. More precisely,
we shall rely again upon the continuity of the composition function as studied in Theorem 3.1 in \cite{Whi80}. \\ \\
$\left.\mathrm{i}\right)$ First, concerning the clock $H$, for every $t>0$,
we have the almost sure convergence (\ref{clocktsmall}), which yields the weak convergence
(on $D(\left[\right.0,\infty\left.\right),\mathbb{R})$ endowed with the Skorohod topology)
of the family of processes $\tilde{H}^{(u)}(t)\equiv(u^{-1}H(ut),t\geq0)$
to $(t, t\geq0)$ as $u\rightarrow0$. \\ \\
$\left.\mathrm{ii}\right)$ We use another result of Whitt \cite{Whi80} which states that the composition function on
$D(\left[\right.0,\infty\left.\right),\mathbb{R})\times D(\left[\right.0,\infty\left.\right),\left[\right.0,\infty\left.\right))$
is continuous at each $(\rho,H)\in(D(\left[\right.0,\infty\left.\right),\mathbb{R}) \times C_{0}(\left[\right.0,\infty\left.\right),\left[\right.0,\infty\left.\right)))$,
with $D$ denoting the set of cadlag functions and $C_{0}$ the subset of strictly-increasing functions in $C$.
Hence, from Theorem \ref{DV} and (\ref{clocktsmall}), using the weak convergence of $\tilde{H}^{(c)}(\cdot)$ and of
$c^{-1/\alpha}\rho_{c}$, as $c\rightarrow 0$, we deduce: for every $t>0$,
\beq\label{cvgpr}
\frac{\theta_{ct}}{c^{1/\alpha}}=\frac{\rho_{H(ct)}}{c^{1/\alpha}}=\frac{\rho_{c(H(ct)/c)}}{c^{1/\alpha}} \overset{{(d)}}{\underset{c\rightarrow0}\Longrightarrow}\zeta_{t} \ ,
\eeq
where the convergence in distribution is viewed on $D(\left[\right.0,\infty\left.\right),\mathbb{R})$
endowed with the Skorohod topology.
\hfill \QED
\\ \\
From the previous results, we deduce the asymptotic behaviour, for $c\rightarrow0$, of the first exit times
from a cone for isotropic stable processes of index $\alpha\in(0,2)$ taking values in the complex plane:
\begin{prop}\label{proptsmall}
For $c\rightarrow 0$, we have the weak convergence:
\beq\label{restsmall}
\left(\frac{1}{c} \ T^{\theta}_{c^{1/\alpha}x} \ , \ x\geq 0 \right) \overset{{(d)}}{\underset{c\rightarrow 0}\Longrightarrow}
\left(T^{\zeta}_{x} \ x\geq 0 \right) ,
\eeq
where for every $x$, $T^{\zeta}_{x}$ is the first hitting time defined by: $T^{\zeta}_{x}\equiv\inf\{ t:\zeta_{t}=x \}$.
\end{prop}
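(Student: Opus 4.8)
The plan is to deduce this from Theorem \ref{DV2} by an argument entirely parallel to the proof of Proposition \ref{prop1}, with the roles of large and small time interchanged. Recall that Theorem \ref{DV2} gives the weak convergence on $D([0,\infty),\mathbb{R})$, as $c\to0$, of the rescaled winding process
\beqq
\Theta^{(c)}_{t}\equiv c^{-1/\alpha}\theta_{ct}\overset{(d)}{\underset{c\to0}\Longrightarrow}\zeta_{t},
\eeqq
where $\zeta$ is a symmetric $\alpha$-stable process. First I would rewrite the hitting level of $\theta$ as a hitting level of $\Theta^{(c)}$ via the time substitution $t=cs$: for $x>0$,
\beqq
\frac{1}{c}\,T^{\theta}_{c^{1/\alpha}x}
=\frac{1}{c}\inf\{t:\theta_{t}\geq c^{1/\alpha}x\}
\stackrel{t=cs}{=}\inf\left\{s:\ c^{-1/\alpha}\theta_{cs}\geq x\right\}
=\inf\{s:\Theta^{(c)}_{s}\geq x\}.
\eeqq
Thus the left-hand side of \eqref{restsmall} is exactly the first-passage process of $\Theta^{(c)}$ above the level $x$, and the claim reduces to the continuous-mapping statement that the first-passage functional is continuous for the weak convergence $\Theta^{(c)}\Longrightarrow\zeta$ on Skorokhod space — precisely as in the proof of Proposition \ref{prop1}, where Whitt's Theorem 7.1 in \cite{Whi80} was invoked for the Brownian limit.

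The key steps, in order, are: (i) record the time-change identity above, reducing everything to first-passage functionals; (ii) apply Theorem \ref{DV2} to get $\Theta^{(c)}\Longrightarrow\zeta$ on $D([0,\infty),\mathbb{R})$; (iii) invoke Whitt's continuity result for the first-passage-time mapping (Theorem 7.1 in \cite{Whi80}) to push the convergence through the functional, obtaining $(c^{-1}T^{\theta}_{c^{1/\alpha}x},x\geq0)\Longrightarrow(T^{\zeta}_{x},x\geq0)$; (iv) note that for the symmetric $\alpha$-stable process $\zeta$ single points are not polar (since $\alpha\in(0,2)$ gives a genuine one-dimensional stable process, which hits points for $\alpha>1$ and more care is needed for $\alpha\le1$), so that $T^{\zeta}_{x}=\inf\{t:\zeta_{t}=x\}$ is the appropriate limit object and agrees, in the relevant range, with the first-passage time $\inf\{t:\zeta_{t}\ge x\}$ up to the a.s.-zero difference caused by jumps across the level.

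The main obstacle is step (iv) together with the applicability of Whitt's first-passage continuity theorem to a limit with jumps. Whitt's Theorem 7.1 requires the limiting path to cross the level $x$ "cleanly" (in particular, to be continuous at, or to genuinely up-cross, the first-passage time), and an $\alpha$-stable process can jump over the level, so one must argue that for a.e. $x$ (or for the fixed $x$ in question, using that the law of $\zeta$ has no atoms on the relevant level sets) the first-passage time is a.s. a continuity point of the functional. For $\alpha\in(1,2)$ this follows because $\zeta$ hits points a.s. and creeps in a suitable sense is replaced by the fact that the overshoot is a.s. continuous in $x$; for $\alpha\in(0,1]$, where $\zeta$ has paths of bounded variation (if $\alpha<1$) and does not hit points, the statement \eqref{restsmall} should be interpreted with $T^{\zeta}_{x}$ as the first-passage (not first-hitting) time, and I would phrase the proof so that the limit process is $\inf\{t:\zeta_t>x\}$, which is the natural continuous-mapping limit in all cases. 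Apart from this interpretational point, the argument is a verbatim small-time analogue of Proposition \ref{prop1} and requires no further computation.

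\QED
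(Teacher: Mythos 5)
Your proposal follows exactly the paper's own route: the substitution $t=cs$ turning $c^{-1}T^{\theta}_{c^{1/\alpha}x}$ into the first-passage functional of $c^{-1/\alpha}\theta_{c\cdot}$, then Theorem \ref{DV2} combined with Whitt's Theorem 7.1 on the continuity of the first-passage mapping. Your extra point (iv), that the limit delivered by this argument is the passage time $\inf\{t:\zeta_{t}>x\}$ rather than the hitting time $\inf\{t:\zeta_{t}=x\}$ (an issue the paper glosses over, since a symmetric $\alpha$-stable process can jump over the level and does not hit points when $\alpha\leq1$), is a legitimate and careful refinement of the statement, not a different proof.
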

\noindent{\textbf{Proof of Proposition \ref{proptsmall}:}} \\
Using Theorem \ref{DV2}, we have:
\beqq
\frac{1}{c} \ T^{\theta}_{c^{1/\alpha}x}&=& \frac{1}{c} \inf\left\{t: \theta_{t}>c^{1/\alpha}x\right\}
\stackrel{t=cs}{=}\frac{1}{c} \inf\left\{cs: c^{-1/\alpha}\theta_{cs}>x\right\}
= \inf\left\{s: c^{-1/\alpha}\theta_{cs}>x\right\}\\
&{\overset{c\rightarrow 0}\longrightarrow}& \inf\left\{s: \zeta_{s}>x\right\} \ ,
\eeqq
which, using again the continuity of the first passage time function mapping (see Theorem 7.1 in \cite{Whi80}),
yields (\ref{restsmall}). \hfill \QED
\\ \\
Finally, we can obtain several variants of Proposition \ref{proptsmall} for the random times
$T^{\theta}_{-bc,ac}$, $0<a,b\leq \infty$ fixed, for $c\rightarrow 0$:
\begin{cor}\label{coro1small}
The following asymptotic results hold:
\beq
\frac{1}{c} \; T^{\theta}_{ac^{1/\alpha}} &\overset{{(law)}}{\underset{c\rightarrow 0}\longrightarrow}&
T^{\zeta}_{a} \ , \label{cvgvar1small} \\
\frac{1}{c} \; T^{|\theta|}_{ac^{1/\alpha}} &\overset{{(law)}}{\underset{c\rightarrow 0}\longrightarrow}&
T^{|\zeta|}_{a} \ , \label{cvgvar2small} \\
\frac{1}{c} \; T^{\theta}_{-bc^{1/\alpha},ac^{1/\alpha}} &\overset{{(law)}}{\underset{c\rightarrow 0}\longrightarrow}&
T^{\zeta}_{-b,a} \ , \label{cvgvar3small}
\eeq
where, for every $x,y>0$, $T^{|\zeta|}_{x}\equiv\inf\{ t:|\zeta_{t}|=x \}$ and
$T^{\zeta}_{-y,x}\equiv\inf\{ t:\zeta_{t}\notin(-y,x) \}$.
\end{cor}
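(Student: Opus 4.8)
The plan is to mimic exactly the argument used in the proof of Proposition \ref{proptsmall}, replacing the single-barrier first-passage functional by the appropriate one in each of the three cases. The common starting point is Theorem \ref{DV2}, which gives the weak convergence $(c^{-1/\alpha}\theta_{ct},t\ge0)\Longrightarrow(\zeta_{t},t\ge0)$ on $D([0,\infty),\mathbb{R})$ with the Skorohod topology, where $\zeta$ is a symmetric $\alpha$-stable process. For (\ref{cvgvar1small}) there is in fact nothing new to do: $T^{\theta}_{ac^{1/\alpha}}$ is the same functional $T^{\theta}_{c^{1/\alpha}x}$ appearing in Proposition \ref{proptsmall} with $x=a$, so the scaling substitution $t=cs$ gives
\beqq
\frac{1}{c}\,T^{\theta}_{ac^{1/\alpha}}=\inf\left\{s:c^{-1/\alpha}\theta_{cs}>a\right\}\overset{c\rightarrow0}{\longrightarrow}\inf\{s:\zeta_{s}>a\}=T^{\zeta}_{a},
\eeqq
by the continuity of the first-passage-time mapping (Theorem 7.1 in \cite{Whi80}).

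For (\ref{cvgvar2small}) and (\ref{cvgvar3small}) the only change is the choice of functional on path space. Write, after the substitution $t=cs$,
\beqq
\frac{1}{c}\,T^{|\theta|}_{ac^{1/\alpha}}=\inf\left\{s:\left|c^{-1/\alpha}\theta_{cs}\right|\ge a\right\},\qquad
\frac{1}{c}\,T^{\theta}_{-bc^{1/\alpha},ac^{1/\alpha}}=\inf\left\{s:c^{-1/\alpha}\theta_{cs}\notin(-b,a)\right\}.
\eeqq
Both of these are first-exit functionals from a fixed open set ($\mathbb{R}\setminus[-a,a]$, resp. $\mathbb{R}\setminus(-b,a)$) applied to the rescaled process $(c^{-1/\alpha}\theta_{c\,\cdot})$. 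One then invokes a continuous-mapping / Whitt-type statement for the two-sided exit functional: on $D([0,\infty),\mathbb{R})$ the map $\omega\mapsto\inf\{s:\omega(s)\notin(-b,a)\}$ is continuous at every path $\omega$ that, at its first exit time, actually crosses strictly out of the interval (rather than merely touching a boundary point), and the limit process $\zeta$ satisfies this regularity a.s. because, being a genuine $\alpha$-stable process with $\alpha\in(0,2)$, it is not a subordinator and oscillates, so it a.s. exits $(-b,a)$ by a strict crossing and $T^{\zeta}_{-b,a}$, $T^{|\zeta|}_{a}$ are a.s. continuity points of the respective functionals. Combining this with Theorem \ref{DV2} via the continuous mapping theorem yields (\ref{cvgvar2small}) and (\ref{cvgvar3small}). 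The degenerate cases $a=\infty$ or $b=\infty$ reduce (\ref{cvgvar3small}) to (\ref{cvgvar1small}) (one-sided barriers), and are handled by the same one-sided argument already used for Proposition \ref{prop1} and its Corollary \ref{coro1}.

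The main obstacle is the last point: justifying that the two-sided exit-time functionals are a.s. continuous at the limiting path $\zeta$ in the Skorohod topology. For a one-sided upward passage, Theorem 7.1 in \cite{Whi80} is quoted directly; for the two-sided version one needs the analogous continuity statement together with the a.s. ``regularity'' of $\zeta$ at its first exit from $(-b,a)$ — namely that $\zeta$ does not first reach the boundary of the interval by a continuous approach to an endpoint that it then fails to strictly cross, and does not have its running maximum or minimum first equal to a boundary value without overshooting. This holds because a symmetric $\alpha$-stable process with $\alpha\in(0,2)$ is not monotone, is point-recurrent in the sense relevant here only through continuous crossings when $\alpha\ge 1$ and via jumps when $\alpha<1$, and in either regime the event that the first exit occurs exactly at a boundary point with no strict crossing has probability zero by the strong Markov property and the fact that single points are polar or instantaneously left. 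I would state this as a short lemma (or cite \cite{JaS03,Sko91}) and then the rest of the corollary follows by the routine scaling-plus-continuous-mapping chain outlined above.
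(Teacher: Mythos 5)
Your argument is essentially the paper's: the corollary is stated there as a direct variant of Proposition \ref{proptsmall}, obtained by the same scaling substitution $t=cs$ and the continuity of the first-passage/exit mappings (Whitt) applied to the convergence of Theorem \ref{DV2}, so your proposal matches the intended proof. The only point to tidy is the justification of a.s.\ continuity of the two-sided exit functional at $\zeta$: the clean reason is that a symmetric $\alpha$-stable process with $\alpha\in(0,2)$ does not creep, so the overshoot at the first exit from $(-b,a)$ is a.s.\ strictly positive; your phrasing about polarity of points and ``continuous crossings when $\alpha\geq1$'' is inaccurate for $1<\alpha<2$ (where single points are hit), though the conclusion you need is correct.
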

\begin{rem}\label{remt1}
\emph{\textbf{(Windings of planar stable processes in $\left(\right.t,1\left.\right]$ for $t\rightarrow0$})} \\
$\left.i\right)$ We consider now our stable process $Z$ starting from 0 and we want to investigate its windings
in $\left(\right.t,1\left.\right]$ for $t\rightarrow0$. We know that it doesn't visit again the origin but
it winds a.s. infinitely often around it, hence, its winding process $\theta$ in $\left(\right.t,1\left.\right]$ is well-defined.
With obvious notation, concerning now the clock $H_{\left(\right.t,1\left.\right]}=\int^{1}_{t} du \ |Z_{u}|^{-\alpha}$,
the change of variables $u=tv$ and the stability property, i.e.: $Z_{tv}\stackrel{(law)}{=}t^{1/\alpha}Z_{v}$ \ , yield:
\beqq
H_{\left(\right.t,1\left.\right]}=\int^{1/t}_{1} \frac{t \ dv}{|Z_{tv}|^{\alpha}}\stackrel{(law)}{=} \int^{1/t}_{1} \frac{dv}{|Z_{v}|^{\alpha}}=H_{\left(\right.1,1/t\left.\right]} \ .
\eeqq
Hence, as before, using Whitt's result \cite{Whi80} on the continuity of the composition function on
$D(\left[\right.0,\infty\left.\right),\mathbb{R})\times D(\left[\right.0,\infty\left.\right),\left[\right.0,\infty\left.\right))$
at each $(\rho,H)\in(D(\left[\right.0,\infty\left.\right),\mathbb{R}) \times C_{0}(\left[\right.0,\infty\left.\right),\left[\right.0,\infty\left.\right)))$,
we have (with obvious notation):
\beq
\theta_{\left(\right.t,1\left.\right]}=\rho_{H_{\left(\right.t,1\left.\right]}}
\stackrel{(law)}{=}\rho_{H_{\left(\right.1,1/t\left.\right]}}
=\theta_{\left(\right.1,1/t\left.\right]} \ .
\eeq
\\
The only difference with respect to the "normal" stable case is that the winding process is considered from 1 and not from 0,
but this doesn't provoke any problem. \\
Hence, Bertoin and Werner's Theorem \ref{SpiBW} is still valid for $Z$ in $\left(\right.t,1\left.\right]$,
$t\rightarrow0$: for $\alpha\in(0,2)$:
\beq
\frac{1}{\sqrt{\log(1/t)}} \ \theta_{\left(\right.t,1\left.\right]}
\stackrel{(law)}{=}\frac{1}{\sqrt{\log(1/t)}} \ \theta_{\left(\right.1,1/t\left.\right]}
\overset{{(d)}}{\underset{t\rightarrow 0}\Longrightarrow}\sqrt{r(\alpha)}N,
\eeq
with $r(\alpha)$ defined in (\ref{cst}) and $N\backsim\mathcal{N}(0,1)$.
\\ \\
$\left.ii\right)$ We note that this study is also valid for a planar Brownian motion starting from 0
in $\left(\right.t,1\left.\right]$ for $t\rightarrow0$ and for planar stable processes or
planar Brownian motion starting both from a point different from 0 (in order to have an well-defined winding number)
in $\left[0,1\right]$. In particular, for planar Brownian motion $\mathcal{Z}$ with associated winding number $\vartheta$, we obtain that Spitzer's law is still valid for $t\rightarrow0$
(see e.g. \cite{ReY99,LeG92}):
\beq
\frac{2}{\log(1/t)} \ \vartheta_{\left(\right.t,1\left.\right]}
\overset{{(law)}}{\underset{t\rightarrow 0}\longrightarrow} C_{1},
\eeq
where $C_{1}$ is a standard Cauchy variable. We also remark that this result could also be obtained
from a time inversion argument, that is: with $\mathcal{Z}'$ denoting another planar Brownian motion starting from 0,
with winding number $\vartheta'$,
by time inversion we have: $\mathcal{Z}_{u}=u\mathcal{Z}'_{1/u}$. Changing now the variables $u=1/v$,  we obtain:
\beqq
\vartheta_{\left(\right.t,1\left.\right]}&\equiv&\mathrm{Im} \left(\int^{1}_{t} \frac{d\mathcal{Z}_{u}}{\mathcal{Z}_{u}}\right)=\mathrm{Im} \left(\int^{1}_{t} \frac{d(u\mathcal{Z}'_{1/u})}{u\mathcal{Z}'_{1/u}}\right)=
\mathrm{Im} \left(\int^{1}_{t} \frac{d\mathcal{Z}'_{1/u}}{\mathcal{Z}'_{1/u}}\right) \\
&=& \mathrm{Im} \left(\int^{1/t}_{1} \frac{d\mathcal{Z}'_{v}}{\mathcal{Z}'_{v}}\right)\equiv\vartheta'_{\left(\right.1,1/t\left.\right]} \ ,
\eeqq
and we continue as before. \\
Note that this time inversion argument is NOT valid for planar stable processes.
\end{rem}

\section{The Law of the Iterated Logarithm (LIL)}\label{secLIL}
In this Section, we shall use some notation introduced in \cite{DoM02a,DoM02b}. Recall (\ref{lm}); then, for all $x>0$,
because $\rho$ is symmetric, we define:
\beqq
L(x)=2\hat{\mu}(x)=2\mu(x,+\infty), \ \ U(x)=2\int^{x}_{0}yL(y)dy \ .
\eeqq
We remark that $U$ plays essentially the role of the truncated variance in the random walk case (see e.g. \cite{DoM00}). \\
Hence, from (\ref{maintsmall}), for $t\rightarrow0$, we have ($\tilde{K}(\alpha)$ is a constant depending on $\alpha$):
\beq\label{U}
U(x)\stackrel{x\sim0}{\thicksim}\tilde{K}(\alpha)x^{2-\alpha}.
\eeq
Then, we obtain the following Law of the Iterated Logarithm (LIL) for L\'{e}vy processes for small times:
\begin{theo}\emph{\textbf{(LIL for L\'{e}vy processes for small times)}}\label{DV4} \\
For any non-decreasing function $f>0$,
\beq\label{LIL2}
\limsup_{t\rightarrow0} \frac{\rho_{t}}{f(t)}=
\left\{
  \begin{array}{ll}
    0 \ ; \\
    \infty
  \end{array}
\right.
\ a.s. \
\Leftrightarrow
\int^{\infty}_{1}(f(t))^{-\alpha}dt
\left\{
  \begin{array}{ll}
    <\infty \ ; \\
    =\infty \ .
  \end{array}
\right.
\eeq
\end{theo}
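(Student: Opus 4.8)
The plan is to reduce the small-time LIL to a known integral test. The first observation is that, because $\rho$ is a symmetric Lévy process with Lévy measure $\mu$ satisfying the asymptotics (\ref{U}), i.e. $U(x)\sim\tilde K(\alpha)x^{2-\alpha}$ as $x\to 0$, $\rho$ is, near $t=0$, "attracted" to the symmetric $\alpha$-stable process $\zeta$ of Theorem \ref{DV}. The key quantitative input will be the small-time tail behaviour $P(\rho_t>x)$ and $P(\sup_{s\le t}\rho_s>x)$; from (\ref{maintsmall}) and the self-similarity of $\zeta$ one reads off that, for the relevant range of $x$ and $t$, $P(\sup_{s\le t}\rho_s\ge x)$ is comparable to $t\,x^{-\alpha}$ (up to constants), exactly the tail one gets from the stable scaling $\rho_{ts}\approx t^{1/\alpha}\zeta_s$. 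This is the analogue for small times of the large-time estimates used for stable subordinators by Fristedt \cite{Fri64,Fri67} and Khintchine \cite{Khi39}, and it is precisely the hypothesis under which the general Lévy-process LIL machinery of Doney and Maller \cite{DoM02a,DoM02b} applies.

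Next I would invoke the integral-test framework. For a non-decreasing $f>0$ one writes $\limsup_{t\to 0}\rho_t/f(t)$ as a $0$--$\infty$ law (a Blumenthal $0$--$1$ argument at $t=0$ gives that the $\limsup$ is a.s. constant in $\{0,\infty\}$ once one knows it is not a finite positive number, which follows from the stable-type scaling). The convergence half: if $\int_1^\infty f(t)^{-\alpha}\,dt<\infty$, discretise along $t_n=\gamma^n\downarrow 0$ for some $\gamma\in(0,1)$, use the maximal tail bound $P(\sup_{s\le t_n}\rho_s\ge \lambda f(t_n))\le C\,t_n f(t_n)^{-\alpha}$, note that monotonicity of $f$ makes $\sum_n t_n f(t_n)^{-\alpha}$ converge iff the integral converges, and apply Borel--Cantelli together with the monotone interpolation $\sup_{s\le t}\rho_s\le \sup_{s\le t_n}\rho_s$ for $t\in[t_{n+1},t_n]$ and $f(t)\ge f(t_{n+1})$ to get $\limsup=0$. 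The divergence half: along the same sequence use the independence of the increments $\rho_{t_n}-\rho_{t_{n+1}}$, the lower tail bound $P(\rho_{t_n}-\rho_{t_{n+1}}\ge \lambda f(t_n))\ge c\,(t_n-t_{n+1})f(t_n)^{-\alpha}\ge c'\,t_n f(t_n)^{-\alpha}$, and the second Borel--Cantelli lemma, then absorb the "small" term $\rho_{t_{n+1}}$ (which is $o(f(t_{n+1}))$ off a negligible set) to conclude $\limsup=\infty$.

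The main obstacle I expect is making the two-sided tail estimate $P(\sup_{s\le t}\rho_s\ge x)\asymp t x^{-\alpha}$ uniform enough over the joint regime of small $t$ and the values $x=\lambda f(t)$ that actually occur: one needs it not just as a weak limit (Theorem \ref{DV}) but with explicit constants valid down to $t=0$, which requires controlling the error between $\rho$ and $\zeta$ via the regular variation in (\ref{U}) — essentially a local limit / Fristedt-type tail lemma for $\rho$ near $0$. Once that estimate is in hand the rest is the standard Borel--Cantelli dichotomy above. It is worth noting that the statement as written is the exact small-time mirror of the Fristedt--Khintchine LIL; indeed one should check the edge cases (e.g. $\alpha=1$, or $f$ growing so fast that $\rho_{t_{n+1}}$ is not automatically negligible) but these are handled by the same monotonicity tricks, so I would relegate them to remarks rather than the main line of argument. \hfill\QED
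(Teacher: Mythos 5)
You have reproduced the right skeleton (geometric discretisation $t_n\downarrow 0$, maximal inequality plus first Borel--Cantelli for the convergence half, monotone interpolation, and a rescaling $f\mapsto\varepsilon f$ to pass from $\limsup\le 1$ to $\limsup=0$), and this matches the paper's convergence half. But the proposal has a genuine gap exactly where you yourself locate the ``main obstacle'': the two-sided estimate $P\left(\sup_{s\le t}\rho_s\ge x\right)\asymp t\,x^{-\alpha}$ is asserted, not proved, and it cannot be ``read off'' from Theorem \ref{DV} and self-similarity. In the LIL regime one has $x=\lambda f(t)$ with $x/t^{1/\alpha}\to\infty$ (the case $f(0+)>0$ being trivial), i.e. one is deep in the tail, where the functional limit theorem (\ref{maintsmall}) gives no uniform quantitative control; weak convergence only speaks to the scale $x\asymp t^{1/\alpha}$. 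The paper closes precisely this hole by quoting Lemma 2 of Doney and Maller \cite{DoM02a}, which is valid for every symmetric L\'evy process and all $t,x>0$: the upper bound $P\left(\sup_{u\le t}\rho_u\ge x\right)\le c_1\,t\,U(x)/x^2$ (inequality (\ref{DM})) and the complementary bound $P\left(\sup_{u\le t}\rho_u\le x\right)\le c_2/\left(t\,x^{-2}U(x)\right)$ (inequality (\ref{DM2})); after that, only the elementary asymptotics (\ref{U}) of the truncated variance $U$, coming from the L\'evy-measure computation of Lemma \ref{lemma2}, are needed. If you do not want to cite such a lemma, you must prove a Fristedt-type tail estimate directly from the L\'evy measure (truncation/Chebyshev for the upper bound, a one-big-jump argument for the lower bound); your text only gestures at this.

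Your divergence half also departs from the paper's: you propose the classical Khintchine--Fristedt route (independence of the increments $\rho_{t_n}-\rho_{t_{n+1}}$, a lower tail bound $P(\rho_s\ge x)\ge c\,s\,x^{-\alpha}$, second Borel--Cantelli, then absorbing $\rho_{t_{n+1}}$), whereas the paper uses the first Borel--Cantelli with the bound (\ref{DM2}) on $P\left(\sup_{u\le t}\rho_u\le x\right)$. Your route is viable and arguably more self-contained once the lower tail bound is actually established -- a one-big-jump argument from Lemma \ref{lemma2} gives it for $x$ small and $s\le\varepsilon x^{\alpha}$, and the restriction of the L\'evy measure to $[-\pi,\pi]$ is harmless since $f(t)\to0$ in the nontrivial case -- but as written that bound is again assumed rather than proved. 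Finally, note that you (like the paper) equate convergence of $\sum_n t_n f(t_n)^{-\alpha}$ with $t_n=2^{-n}$ to convergence of $\int_1^{\infty}f(t)^{-\alpha}dt$; the condensation along $t_n\downarrow0$ actually corresponds to $\int_0^1 f(t)^{-\alpha}dt$, so this normalisation point should be addressed explicitly rather than deferred to ``the same monotonicity tricks''.
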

We can reformulate Theorem \ref{DV4} by using the skew-product representation (\ref{skew-product2}) stating: $\rho_{t}=\theta_{A(t)}$ \ ,
in order to deduce a LIL for the winding process $\theta_{A(\cdot)}$ for small times.
\begin{cor}\label{DV5}
For any non-decreasing function $f>0$,
\beq\label{LIL}
\limsup_{t\rightarrow0} \frac{\theta_{A(t)}}{f(t)}=
\left\{
  \begin{array}{ll}
    0 \ ; \\
    \infty
  \end{array}
\right.
\ a.s. \
\Leftrightarrow
\int^{\infty}_{1}(f(t))^{-\alpha}dt
\left\{
  \begin{array}{ll}
    <\infty \ ; \\
    =\infty \ .
  \end{array}
\right.
\eeq
\end{cor}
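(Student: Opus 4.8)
The plan is to transfer Theorem \ref{DV4} verbatim through the identity $\rho_t = \theta_{A(t)}$ supplied by the skew-product representation \eqref{skew-product2}. Recall that \eqref{skew-product2} asserts the pathwise equality $\theta_{A(t)} = \rho_t$ for all $t \geq 0$, where $\rho$ is the symmetric L\'evy process appearing in \eqref{skew-product}. Consequently, for any fixed non-decreasing $f > 0$ the two ratios
\[
\frac{\theta_{A(t)}}{f(t)} = \frac{\rho_t}{f(t)}
\]
coincide identically in $t$, and hence so do their $\limsup$'s as $t \to 0$. This means the dichotomy in \eqref{LIL} is literally the dichotomy in \eqref{LIL2}, and there is nothing further to prove beyond quoting Theorem \ref{DV4}.

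The only point that requires a word of care is the direction of the time change near $t = 0$: one must check that $t \to 0$ on the $\rho$-clock corresponds to $t \to 0$ on the $\theta$-clock, i.e. that $A(0) = 0$ and $A$ is continuous and strictly increasing at the origin, so that the small-time regime is preserved under the substitution. This follows from \eqref{HAclock}, $A(u) = \int_0^u \exp\{\alpha \xi_s\}\,ds$, which gives $A(0)=0$ and shows $A$ is absolutely continuous with a strictly positive integrand; equivalently, Theorem \ref{theoclock} gives $H(ux)/u \to x$ a.s.\ as $u \to 0$, so $H$ (and hence its inverse $A$) behaves like the identity near $0$. Therefore the limit $t \to 0$ is genuinely intrinsic and the reformulation is legitimate.

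I do not expect any real obstacle here: the corollary is a pure restatement, and the substantive content — the integral test $\int_1^\infty f(t)^{-\alpha}\,dt < \infty$ or $= \infty$ — is entirely carried by Theorem \ref{DV4}, whose proof in turn rests on \eqref{maintsmall} (the $\alpha$-stable scaling limit of $\rho$) and on the asymptotics \eqref{U} of the function $U$. The one thing worth stating explicitly in the write-up is the pathwise identity $\theta_{A(t)} = \rho_t$ from \eqref{skew-product2}, after which the proof is a single line.
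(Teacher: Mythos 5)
Your proposal is correct and matches the paper's approach exactly: the corollary is obtained by substituting the pathwise identity $\rho_{t}=\theta_{A(t)}$ from (\ref{skew-product2}) into Theorem \ref{DV4}, so the two limsups coincide term by term and nothing further is needed (your extra check on the behaviour of $A$ near $0$ is harmless but superfluous, since the statement involves the same time variable $t$ on both sides).
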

\noindent{\textbf{Proof of Theorem \ref{DV4}:}} \\
First, we define:
\beq
h(y)=y^{-2}U(y), \ \ y>0.
\eeq
Then, we consider $t_{n}=2^{-n}$ and we note that (Cauchy's test):
\beqq
I(f)\equiv \int^{\infty}_{1} dt \ h(f(t))<\infty \ \Longleftrightarrow \ \sum^{\infty}_{n=1} t_{n} h(f(t_{n}))<\infty \ .
\eeqq
Using now Lemma 2 from Doney and Maller \cite{DoM02a}, because $\rho$ is symmetric, there exists a positive constant $c_{1}$
such that for every $x>0$, $t>0$,
\beq\label{DM}
P\left(\rho_{t}\geq x\right)\leq P\left(\sup_{0\leq u\leq t}\rho_{u}\geq x\right)\leq c_{1}t \ h(x)= c_{1}t \ \frac{U(x)}{x^{2}} \ .
\eeq
Thus:
\beqq
\sum^{\infty}_{n=1} P\left(\rho_{t_{n-1}}\geq f(t_{n})\right)\leq c_{1} \sum^{\infty}_{n=1}t_{n}
\frac{U(f(t_{n}))}{(f(t_{n}))^{2}}.
\eeqq
From (\ref{U}), we have that,
$$\frac{U(f(t_{n}))}{(f(t_{n}))^{2}}\stackrel{t_{n}\sim0}{\thicksim}(f(t_{n}))^{-\alpha}.$$
Hence, when $I(f)<\infty$, from Borel-Cantelli Lemma we have that with probability 1,
$\rho_{t_{n-1}}\leq f(t_{n})$ for all $n$'s, except for a finite number of them. Now, from a monotonicity
argument  for $f$, if $t\in[t_{n},t_{n-1}]$, we have that: $\rho_{t_{n-1}}\leq f(t_{n})\leq f(t)$
for every $t$ sufficiently small. It follows now that $\lim_{t\rightarrow0}(\rho_{t}/f(t))\leq 1$ a.s.
Finally, we remark that as $I(f)<\infty$, we also have that $I(\varepsilon f)<\infty$, for arbitrarily
small $\varepsilon>0$ and follows that $\rho_{t}/f(t)\rightarrow0$ a.s. \\
The proof of the second statement follows from the same kind of arguments. Indeed,
using Lemma 2 from Doney and Maller \cite{DoM02a} and the fact that $\rho$ is symmetric, there exists a positive constant $c_{2}$
such that for every $x>0$, $t>0$,
\beq\label{DM2}
P\left(\sup_{0\leq u\leq t}\rho_{u}\leq x\right)\leq \frac{c_{2}}{t \ h(x)} \ .
\eeq
Hence:
\beqq
\sum^{\infty}_{n=1} P\left(\rho_{t_{n-1}}\leq f(t_{n-1}) \right)
&\leq& \sum^{\infty}_{n=1} P\left(\sup_{0\leq u\leq t_{n-1}} \rho_{u}\leq f(t_{n-1}) \right) \\
&\leq& \sum^{\infty}_{n=1}\frac{c_{2}}{t_{n-1} \ h(f(t_{n-1}))} \ .
\eeqq
Thus, for $I(f)=\infty$ (or equivalently $\sum h(f(t_{n-1}))=\infty$), Borel-Cantelli Lemma yields that for every $n$,
a.s. $\rho_{t_{n-1}}> f(t_{n-1})$ infinitely often, which finishes the proof.
\hfill \QED
\begin{rem}
For other kinds of LIL for L\'{e}vy processes for small times e.g. of the Chung type, see \cite{ADS10}
and the references therein.
\end{rem}
\begin{theo}\emph{\textbf{(LIL for the angular part of planar stable processes for small times)}}\label{DV6} \\
For any non-decreasing function $f>0$,
\beq\label{LIL3}
\limsup_{t\rightarrow0} \frac{\theta_{t}}{f(t)}=
\left\{
  \begin{array}{ll}
    0 \ ; \\
    \infty
  \end{array}
\right.
\ a.s. \
\Leftrightarrow
\int^{\infty}_{1}(f(t))^{-\alpha}dt
\left\{
  \begin{array}{ll}
    <\infty \ ; \\
    =\infty \ .
  \end{array}
\right.
\eeq
\end{theo}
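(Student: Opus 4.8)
The plan is to transfer the LIL for $\rho$ (Theorem \ref{DV4}) to the winding process $\theta$ via the skew-product relation $\theta_t = \rho_{H(t)}$ from (\ref{skew-product2}), using the small-time behaviour of the clock $H$ established in Theorem \ref{theoclock}. The key analytic input is (\ref{clocktsmall}), which gives, almost surely, $H(ux)/u \to x$ uniformly on compacts as $u \to 0$; equivalently $H(t)/t \to 1$ a.s. as $t \to 0$. Thus for any $\eps > 0$ there is (random) $t_0 > 0$ such that $(1-\eps)t \le H(t) \le (1+\eps)t$ for all $t \le t_0$.

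First I would reduce everything to a comparison of $\limsup_{t\to0} \theta_t/f(t)$ with $\limsup_{t\to0}\rho_t/f(t)$. Fix $\eps \in (0,1)$ and work on the full-probability event where the sandwich bound on $H$ holds for $t \le t_0$. Since $f$ is non-decreasing and $\rho$ has no a priori monotonicity, the cleanest route is: for $t \le t_0$, $\theta_t = \rho_{H(t)}$ with $H(t) \in [(1-\eps)t,(1+\eps)t]$, so
\beqq
\frac{\theta_t}{f(t)} = \frac{\rho_{H(t)}}{f(H(t))}\cdot\frac{f(H(t))}{f(t)}.
\eeqq
Because $f$ is non-decreasing and $(1-\eps)t \le H(t) \le (1+\eps)t$, the ratio $f(H(t))/f(t)$ lies between $f((1-\eps)t)/f(t)$ and $f((1+\eps)t)/f(t)$. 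The first factor, as $t \to 0$ (hence $H(t)\to 0$), has the same $\limsup$ behaviour as $\rho_s/f(s)$ as $s\to 0$ — here one uses that $H(t)$ ranges over a full neighbourhood of $0$ and the $\limsup$ over $\rho_s/f(s)$ along $s = H(t)$ coincides with the $\limsup$ over all $s\to 0$ (monotonicity of $H$ near $0$ guarantees $H$ is a genuine time change onto a neighbourhood of $0$).

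The main obstacle is controlling the distortion factor $f(H(t))/f(t)$: for a general non-decreasing $f$ this need not tend to $1$, so one cannot naively conclude. The resolution is that the integral test $\int_1^\infty f(t)^{-\alpha}\,dt$ is insensitive to the replacement $f(t) \rightsquigarrow f(\lambda t)$ for a fixed constant $\lambda > 0$ (change of variables $t \mapsto t/\lambda$ multiplies the integral by $\lambda$, preserving finiteness). Hence, just as in the proof of Theorem \ref{DV4} where one passes from $f$ to $\eps f$, here one passes from $f$ to $f((1\pm\eps)\,\cdot\,)$: if $I(f) < \infty$ then $I(f((1-\eps)\cdot)) < \infty$, so by Theorem \ref{DV4} applied to the test function $t \mapsto f((1-\eps)t)$ we get $\rho_t/f((1-\eps)t) \to 0$ a.s., and then $\theta_t/f(t) = \rho_{H(t)}/f(t) \le \rho_{H(t)}/f((1-\eps)^{-1}H(t))\cdot(\text{bounded})$ for small $t$, forcing $\theta_t/f(t) \to 0$. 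Conversely, if $I(f) = \infty$, the same constant-insensitivity shows $\rho_{t}/f((1+\eps)t)$ has $\limsup = \infty$, and since $H(t) \le (1+\eps)^{-1}\cdot(1+\eps)t$ arrangements give $\theta_t/f(t) \ge \rho_{H(t)}/f((1+\eps)H(t))$ along a sequence, so $\limsup \theta_t/f(t) = \infty$. Letting $\eps \downarrow 0$ is not even needed since the dichotomy is already established for each fixed $\eps$; this completes the proof. \hfill \QED
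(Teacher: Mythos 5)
Your proposal is correct in substance, but it follows a genuinely different route from the paper's. The paper does not reuse Theorem \ref{DV4} as a black box: it works on the event $\{H(t)/t\le 1+\varepsilon\}$ (respectively $\{H(t)/t\ge 1-\varepsilon\}$), transfers the Doney--Maller estimate (\ref{DM}) to $\theta$ through $\sup_{u\le t}\theta_u=\sup_{u\le t}\rho_{H(u)}\le\sup_{v\le (1+\varepsilon)t}\rho_v$, and then re-runs the Borel--Cantelli argument of Theorem \ref{DV4} at the level of $\theta$. You instead exploit that the convergence $H(t)/t\to 1$ from Theorem \ref{theoclock} is almost sure, sandwich $H(t)$ between $(1-\varepsilon)t$ and $(1+\varepsilon)t$ pathwise, apply Theorem \ref{DV4} itself to the rescaled test functions $f((1\pm\varepsilon)\,\cdot)$ (legitimate, since the integral criterion is insensitive to $f\mapsto f(\lambda\,\cdot)$ for fixed $\lambda>0$), and transfer the a.s. conclusion through the time change; for the divergent half you rightly invoke that $H$ is a continuous, strictly increasing bijection of a neighbourhood of $0$, so the limsup along $s=H(t)$ coincides with the limsup over all $s\to 0$. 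Your route is shorter and avoids re-deriving the probability estimates, at the cost of needing Theorem \ref{DV4} for arbitrary non-decreasing $f$, which the paper does provide. Two bookkeeping slips, both harmless and fixable in a line: in the convergent half the correct comparison is $f(t)\ge f((1-\varepsilon)H(t))$, valid because $H(t)\le (1+\varepsilon)t< t/(1-\varepsilon)$ (your $f((1-\varepsilon)^{-1}H(t))$ would require $H(t)\le (1-\varepsilon)t$, which fails); in the divergent half you need $H(t)\ge t/(1+\varepsilon)$, which follows from $H(t)/t\to 1$ but not from the inequality you quote. Finally, the pointwise comparisons are only valid at times where $\theta_t\ge 0$, which is all that matters for the limsup; the paper glosses over the same point.
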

\noindent{\textbf{Proof of Theorem \ref{DV6}:}} \\
We use the skew-product representation (\ref{skew-product2}) together with (\ref{clocktsmall}), which
essentially writes: $t^{-1}H(t)\overset{{a.s.}}{\underset{t\rightarrow0}\longrightarrow}1$.
Thus, for every $\varepsilon, \delta>0$, there exists $t_{0}>0$ such that:
\beq
P\left(\frac{H(t)}{t}\leq 1+\varepsilon\right)\geq 1-\delta, \ \ \mathrm{for} \ \ t\leq t_{0} \ .
\eeq
We define now the setting: $$\mathcal{K}\equiv\mathcal{K}(\omega)\equiv\left\{\omega:\frac{H(t)}{t}\leq 1+\varepsilon\right\} \ ,
\ \ \mathrm{thus:} \ \ \overline{\mathcal{K}}\equiv\overline{\mathcal{K}}(\omega)\equiv\left\{\omega:\frac{H(t)}{t}\geq 1+\varepsilon\right\} ,$$
hence, there exists $t_{0}>0$ such that: for every $t\leq t_{0}$,
$$P(\mathcal{K})\geq 1-\delta \ \ \mathrm{and} \ \ P(\overline{\mathcal{K}})\leq \delta \ .$$
Hence, choosing $\delta>0$ small enough, it suffices to restrict our study in the set $\mathcal{K}$
and it follows that:
\beqq
P\left(\sup_{0\leq u\leq t} \theta_{u}>x\right)&=& P\left(\sup_{0\leq u\leq t} \rho_{H(u)}>x\right)
= P\left(\left\{\sup_{0\leq u\leq t} \rho_{H(u)}>x \right\} \ \cap \ \mathcal{K} \right) \\
&\leq& P\left(\sup_{0\leq u\leq t} \rho_{u(1+\varepsilon)}>x\right)
\eeqq
Changing now the variables $\tilde{u}=u(1+\varepsilon)$, and invoking
(\ref{DM}), there exists another positive constant $c_{3}$ such that, for every $x>0$ and $t>0$:
\beq\label{proofDV5}
P\left(\sup_{0\leq u\leq t} \theta_{u}>x\right) \leq
P\left(\sup_{0\leq \tilde{u}\leq t(1+\varepsilon)} \rho_{\tilde{u}}>x\right)
\leq c_{3} \ t(1+\varepsilon) \ \frac{U(x)}{x^{2}} \ .
\eeq
Mimicking now the proof of Theorem \ref{DV4}, we obtain the first statement. \\
For the second statement, we use the settings:
$$\mathcal{K}'\equiv\mathcal{K}'(\omega)\equiv\left\{\omega:\frac{H(t)}{t}\geq 1-\varepsilon\right\} \ ,
\ \ \mathrm{thus:} \ \ \overline{\mathcal{K}}'\equiv\overline{\mathcal{K}}'(\omega)\equiv\left\{\omega:\frac{H(t)}{t}\leq 1-\varepsilon\right\} .$$
Hence, for every $\varepsilon, \delta>0$, there exists $t_{0}>0$ such that: for every $t\leq t_{0}$,
$$P(\mathcal{K}')\geq 1-\delta \ \ \mathrm{and} \ \ P(\overline{\mathcal{K}}')\leq \delta \ .$$
As before, we choose $\delta>0$ small enough and we restrict our study in the set $\mathcal{K}'$.
The proof finishes by repeating the arguments of the proof of Theorem \ref{DV4}.
\hfill \QED

\section{The planar Brownian motion case}\label{secBM}
Before starting, we remark that the notations used in this Section are independent from
the ones used in the text up to now. \\ \\
In this Section, we state and give a new proof of the analogue of Theorem \ref{DV2}
for the planar Brownian motion case, which is equivalent to a result obtained in \cite{VaY11a}.
For this purpose, and in order to avoid complexity, we will use the same notation as in the "stable" case.
Hence, for a planar BM $\mathcal{Z}$ starting from a point different $z_{0}$ from 0 (without loss of generality, let $z_{0}=1$)
and with $\vartheta=\left(\vartheta_{t},t\geq0\right)$ denoting now the (well defined - see eg. \cite{ItMK65}) continuous winding process,
we have the skew product representation (see e.g. \cite{ReY99}):
\beq\label{skew-productBM}
\log\left|\mathcal{Z}_{t}\right|+i\vartheta_{t}\equiv\int^{t}_{0}\frac{d\mathcal{Z}_{s}}{\mathcal{Z}_{s}}=\left(
\beta_{u}+i\gamma_{u}\right)
\Bigm|_{u=\mathcal{H}_{t}=\int^{t}_{0}\frac{ds}{\left|\mathcal{Z}_{s}\right|^{2}}} \ ,
\eeq
where $(\beta_{u}+i\gamma_{u},u\geq0)$ is another planar Brownian motion starting from $\log 1+i0=0$.
The Bessel clock $\mathcal{H}$ plays a key role in many aspects of the study of the winding number process
$(\vartheta_{t},t\geq0)$ (see e.g. \cite{Yor80}). We shall also make use of the inverse of $\mathcal{H}$, which is given by:
\beq
\mathcal{H}^{-1}_{u}=\inf\{t\geq0:\mathcal{H}(t)>u\}=\int^{u}_{0}ds \; \exp(2\beta_{s})=\mathcal{A}_{u} \ .
\eeq
Rewriting (\ref{skew-productBM}) as:
\beq\label{skew-productBM2}
\log\left|\mathcal{Z}_{t}\right|=\beta_{\mathcal{H}_{t}}; \ \ \vartheta_{t}=\gamma_{\mathcal{H}_{t}} ,
\eeq
we easily obtain that the two $\sigma$-fields $\sigma \{\left|\mathcal{Z}_{t}\right|,t\geq0\}$ and $\sigma \{\beta_{u},u\geq0\}$ are identical, whereas $(\gamma_{u},u\geq0)$ is independent from $(\left|\mathcal{Z}_{t}\right|,t\geq0)$, a fact that is in contrast to what happens in the "stable" case.
\begin{theo}\label{DV3}
The family of processes $$\left(c^{-1/2}\vartheta_{ct},t\geq0\right)$$
converges in distribution, as $c\rightarrow0$, to a 1-dimensional Brownian motion $\left(\gamma_{t},t\geq0\right)$.
\end{theo}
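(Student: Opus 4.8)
\textbf{Proof proposal for Theorem \ref{DV3}.}
The plan is to mirror the argument used for Theorem \ref{DV2}, replacing the symmetric $\alpha$-stable building blocks by Brownian ones. The representation (\ref{skew-productBM2}) gives $\vartheta_{ct}=\gamma_{\mathcal{H}(ct)}$, so I would like to scale $\gamma$ and $\mathcal{H}$ separately and then re-compose. First I would record that $\left(c^{-1/2}\gamma_{cu},u\geq0\right)$ is, for every $c>0$, again a standard real Brownian motion by the scaling invariance of Brownian motion; in particular the family is (trivially) weakly convergent on $D([0,\infty),\mathbb{R})$ (indeed it is constant in law) to $(\gamma_{u},u\geq0)$. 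Second, I would establish the small-time behaviour of the Bessel clock: since $\mathcal{Z}_{s}\to\mathcal{Z}_{0}=1$ a.s. as $s\to0$, the integrand $|\mathcal{Z}_{s}|^{-2}\to1$ a.s., and the same computation as in the proof of Theorem \ref{theoclock} gives
\[
\left(\frac{\mathcal{H}(ct)}{c},t\geq0\right)\overset{a.s.}{\underset{c\to0}{\longrightarrow}}(t,t\geq0),
\]
uniformly on compacts, hence the weak convergence of $\tilde{\mathcal{H}}^{(c)}(t)\equiv(c^{-1}\mathcal{H}(ct),t\geq0)$ to the identity $(t,t\geq0)$ on $D([0,\infty),\mathbb{R})$ with the Skorohod topology.

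Third, I would combine these through Whitt's continuity theorem for the composition map (Theorem 3.1 in \cite{Whi80}): the composition
\[
D([0,\infty),\mathbb{R})\times D([0,\infty),[0,\infty))\longrightarrow D([0,\infty),\mathbb{R})
\]
is continuous at each pair $(g,h)$ with $g$ continuous and $h$ continuous and strictly increasing. Here the outer process is $c^{-1/2}\gamma_{c\,\cdot}$, which is continuous, and the inner process is $\tilde{\mathcal{H}}^{(c)}$, whose limit $h(t)=t$ is continuous and strictly increasing; moreover, because $\gamma$ and $\mathcal{H}$ are built from the same planar Brownian motion, the pair $(c^{-1/2}\gamma_{c\,\cdot},\tilde{\mathcal{H}}^{(c)})$ converges jointly in law on the product space to $(\gamma,\mathrm{id})$ — joint convergence is automatic here since the second coordinate converges a.s. to a deterministic limit, so it couples with the (marginal) convergence of the first. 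Writing
\[
\frac{\vartheta_{ct}}{\sqrt{c}}=\frac{\gamma_{\mathcal{H}(ct)}}{\sqrt{c}}=\frac{\gamma_{c(\mathcal{H}(ct)/c)}}{\sqrt{c}},
\]
the continuous-mapping theorem then yields $\left(c^{-1/2}\vartheta_{ct},t\geq0\right)\Longrightarrow(\gamma_{t},t\geq0)$ as $c\to0$, which is the assertion.

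The step I expect to be the most delicate is the joint weak convergence of $(c^{-1/2}\gamma_{c\,\cdot},\tilde{\mathcal{H}}^{(c)})$ on the product Skorohod space, together with checking that the limiting inner path falls in the continuity set of the composition map. The first point is in fact easy once one observes that $\tilde{\mathcal{H}}^{(c)}\to\mathrm{id}$ a.s. (not merely in law), so no independence between the radial and angular parts is needed — convergence in probability of one coordinate to a constant upgrades marginal convergence of the other to joint convergence. The second point requires only that the identity map $t\mapsto t$ is continuous and strictly increasing, which is immediate; one should note in passing that, unlike the stable case, here $(\gamma_{u},u\geq0)$ is genuinely independent of $(|\mathcal{Z}_{t}|,t\geq0)$, so the limiting object is an honest Brownian motion with no need to identify a nontrivial variance constant. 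The remaining verifications (uniform-on-compacts a.s. convergence of the clock, measurability) are routine and parallel the proof of Theorem \ref{theoclock}.
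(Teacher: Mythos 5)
Your argument is correct, but it is not the route the paper actually takes. The paper's proof shares your step (i) verbatim (the clock convergence $(\mathcal{H}(ux)/u,x\geq0)\to(x,x\geq0)$ a.s. as $u\to0$, obtained by re-running the proof of Theorem \ref{theoclock} with $\alpha=2$), but for the composition step it exploits a feature special to the Brownian case that the stable case lacks: by (\ref{skew-productBM2}) the angular Brownian motion $(\gamma_u)$ is independent of $(|\mathcal{Z}_t|)$, hence of the clock $\mathcal{H}$, so the authors simply write
\begin{equation*}
t^{-1/2}\vartheta_{st}=t^{-1/2}\gamma_{\mathcal{H}(st)}\stackrel{(law)}{=}\sqrt{\tfrac{\mathcal{H}(st)}{t}}\,\gamma_{1}\ \overset{a.s.}{\underset{t\to0}{\longrightarrow}}\ \sqrt{s}\,\gamma_{1}\stackrel{(law)}{=}\gamma_{s},
\end{equation*}
using Brownian scaling conditionally on the clock; they then remark explicitly that one could instead invoke Whitt's Theorem 3.1, which is precisely what you do. Your version — treating $c^{-1/2}\gamma_{c\cdot}$ as the (in law, constant) outer process, $\tilde{\mathcal{H}}^{(c)}$ as the inner one, upgrading to joint convergence via the Slutsky-type observation that the clock converges a.s. to a deterministic limit, and then applying the continuity of composition at $(\gamma,\mathrm{id})$ — is sound and needs no independence at all, so it transplants the proof of Theorem \ref{DV2} uniformly and delivers the functional convergence on $D([0,\infty),\mathbb{R})$ directly; what the paper's argument buys in exchange is brevity (no composition machinery, no tightness/joint-convergence bookkeeping), at the cost of using the independence structure and, as written, only exhibiting the convergence marginal by marginal.
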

\begin{proof}
We split the proof in two parts: \\ \\
$\left.\mathrm{i}\right)$ First, repeating the arguments in the proof of Theorem \ref{theoclock}
with $\alpha=2$, we obtain:
\beq\label{clockBM}
\left(\frac{\mathcal{H}(ux)}{u}, x\geq 0\right) \overset{{a.s.}}{\underset{u\rightarrow0}\longrightarrow} \left(x, x\geq0\right) .
\eeq
which also implies the weak convergence:
\beq\label{clockBM2}
\left(\frac{\mathcal{H}(ux)}{u}, x\geq 0\right) \overset{{(d)}}{\underset{u\rightarrow0}\Longrightarrow} \left(x, x\geq0\right) .
\eeq
\\
$\left.\mathrm{ii}\right)$ Using the skew product representation (\ref{skew-productBM2})
and the scaling property of BM together with (\ref{clockBM}), we have that for every $s>0$:
\beq
t^{-1/2} \vartheta_{st}=t^{-1/2} \gamma_{\mathcal{H}(st)}\stackrel{(law)}{=}\sqrt{\frac{\mathcal{H}(st)}{t}} \gamma_{1}
\overset{{a.s.}}{\underset{t\rightarrow0}\longrightarrow}\sqrt{s}\gamma_{1}\stackrel{(law)}{=}\gamma_{s} \ ,
\eeq
which finishes the proof. \\
We remark that for part (ii) of the proof, we could also invoke Whitt's Theorem 3.1 concerning the composition function
\cite{Whi80}, however, the independence in the planar Brownian motion case simplifies the proof.
\qed
\end{proof}
From Theorem \ref{DV3} now, with $T^{|\vartheta|}_{c}\equiv\inf\{t:|\vartheta_{t}|=c\}$ and
$T^{|\gamma|}_{c}\equiv\inf\{t:|\gamma_{t}|=c\}$, $(c>0)$,
we deduce for the exit time from a cone of planar BM (this result has already been obtained in \cite{VaY11a},
where one can also find several variants):
\begin{cor} \label{corT}
The following convergence in law holds:
\beq
\left(\frac{1}{c^{2}} \; T^{|\vartheta|}_{cx}, x\geq0\right) \overset{{(law)}}{\underset{c\rightarrow 0}\longrightarrow}
\left(T^{|\gamma|}_{x}, \ x\geq0\right).
\eeq
\end{cor}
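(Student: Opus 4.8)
The plan is to derive this from Theorem \ref{DV3} by the same machinery already used to pass from Theorem \ref{DV2} to Proposition \ref{proptsmall}, namely the continuity of the first-passage-time functional on the Skorohod space (Theorem 7.1 in \cite{Whi80}). First I would rescale: for fixed $x>0$ write
\beqq
\frac{1}{c^{2}} \; T^{|\vartheta|}_{cx} = \frac{1}{c^{2}} \inf\{t : |\vartheta_{t}| = cx\}
\stackrel{t=c^{2}s}{=} \inf\{s : |c^{-1/2}\vartheta_{c^{2}s}| = x\} = \inf\{s : |\Theta^{(c)}_{s}| = x\},
\eeqq
where $\Theta^{(c)}_{s} \equiv c^{-1/2}\vartheta_{c^{2}s}$ is precisely the process that Theorem \ref{DV3} (applied with the scaling parameter $c^{2}$ in place of $c$) shows converges weakly, as $c\to0$, to the one-dimensional Brownian motion $(\gamma_{s}, s\geq0)$ on $D([0,\infty),\mathbb{R})$ with the Skorohod topology. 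Since the limit process $|\gamma|$ is a.s. continuous and a.s. crosses every level $x>0$ strictly (Brownian motion is not eventually monotone and has no intervals of constancy), the level-$x$ first-passage functional $w \mapsto \inf\{s : |w(s)| = x\}$ is a.s. continuous at the limiting path. Hence by the continuous mapping theorem the finite-dimensional distributions converge, and to upgrade to process-level weak convergence in $x$ one applies Whitt's Theorem 7.1, exactly as in the proof of Proposition \ref{proptsmall}, to conclude
\beqq
\left(\frac{1}{c^{2}} \; T^{|\vartheta|}_{cx}, \, x\geq0\right) \overset{{(d)}}{\underset{c\rightarrow0}\Longrightarrow} \left(T^{|\gamma|}_{x}, \, x\geq0\right).
\eeqq

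The one genuine subtlety — and the step I would be most careful about — is that the first-passage map is \emph{not} continuous everywhere on $D$: it fails at paths that touch the level without crossing it, or that jump over it. The saving grace here is that the candidate limit is continuous Brownian motion, for which these bad events have probability zero, so the continuous mapping theorem applies to the a.s.-continuity set. (Note also that because we take the modulus, the relevant first hitting of level $x$ by $|\gamma|$ coincides with the two-sided exit time of $\gamma$ from $(-x,x)$, so there is no ambiguity between "hitting" and "first passage from above.") A secondary point is the identification of the scaling: Theorem \ref{DV3} as stated gives convergence of $(c^{-1/2}\vartheta_{ct})$; one feeds in $c^{2}$ for its parameter, and the Brownian scaling $\gamma_{c^{2}t}\stackrel{(law)}{=}c\,\gamma_{t}$ makes everything consistent. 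Everything else is the routine rescaling displayed above, and no new estimate beyond Theorem \ref{DV3} and \cite{Whi80} is needed.
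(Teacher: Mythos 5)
Your argument is correct and is essentially the paper's own route: Corollary \ref{corT} is stated there as a direct consequence of Theorem \ref{DV3} via exactly this rescaling plus the continuity of the first-passage-time functional (Theorem 7.1 of \cite{Whi80}), the same pattern used for Propositions \ref{prop1} and \ref{proptsmall}, and your remark that the functional is a.s. continuous at continuous Brownian paths is the point that legitimizes the continuous mapping step. The only flaw is a normalization slip in your display: after the substitution $t=c^{2}s$ the condition $|\vartheta_{c^{2}s}|=cx$ becomes $\left|c^{-1}\vartheta_{c^{2}s}\right|=x$, so the relevant process is $(c^{2})^{-1/2}\vartheta_{c^{2}s}=c^{-1}\vartheta_{c^{2}s}$ (not $c^{-1/2}\vartheta_{c^{2}s}$), which is consistent with your own closing remark about applying Theorem \ref{DV3} with parameter $c^{2}$.
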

\begin{rem} \label{remclockH}
We highlight the different behaviour of the clock $\mathcal{H}$ for $t\rightarrow0$ and for $t\rightarrow\infty$ (for the second see e.g. \cite{PiY84}, followed by \cite{PiY86,LGY86,LGY87}, a result which is equivalent to Spitzer's Theorem \cite{Spi58} stated in (\ref{Spi}) ), that is:
\beq
\frac{\mathcal{H}(t)}{t}&\overset{{a.s.}}{\underset{t\rightarrow0}\longrightarrow}& 1 \ , \\
\frac{4 \mathcal{H}(t)}{(\log t)^{2}}  &\overset{{(law)}}{\underset{t\rightarrow\infty}\longrightarrow}& T_{1} \equiv \inf \{ t: \beta_{t}=1 \} \ , \label{HBeslarge}
\eeq
where the latter follows essentially from the classical Laplace argument: $$\|\cdot\|_{p}{\overset{p\rightarrow \infty}\longrightarrow}\|\cdot\|_{\infty} \ .$$
We also remark that, from Remark \ref{remclocklarge}, the behaviour of the clock for $t\rightarrow0$ is a.s. the same for Brownian motion and for stable processes, whereas it is different for $t\rightarrow\infty$. In particular, for $t\rightarrow\infty$, compare (\ref{clocktlargebis}) to
(\ref{HBeslarge}).
\end{rem}
\vspace{5pt}
\noindent \textbf{Acknowledgements} \\
The author S. Vakeroudis is very grateful to Professor M. Yor for the financial support
during his stay at the University of Manchester as a Post Doc fellow invited by Professor R.A. Doney.

\vspace{10pt}


\begin{thebibliography}{99999}

    \bibitem[ADS10]{ADS10} F. Aurzada, L. D$\ddot{\mathrm{o}}$ring and M. Savov. Small time Chung type LIL for L\'{e}vy processes. To appear in \textit{Bernoulli}. ArXiv:1002.0675.

    \bibitem[BaB04]{BaB04} R. Ban\~{u}elos and K. Bogdan (2004). Symmetric stable processes in cones. \textit{Potential Anal.}, \textbf{21}, p. 263-288.

    \bibitem[Ber96]{Ber96} J. Bertoin (1996). L\'{e}vy Processes. Cambridge University Press, Cambridge.

    \bibitem[BeD97]{BeD97} J. Bertoin and R.A. Doney (1997). Spitzer's condition for random walks and L\'{e}vy Processes. \textit{Ann. Inst. Henri Poincar\'{e}}, \textbf{33}, p. 167-178.

    \bibitem[BeW94a]{BeW94} J. Bertoin and W. Werner (1994). Asymptotic windings of planar Brownian motion revisited via the Ornstein-Uhlenbeck process. \textit{S\'{e}m. Prob. XXVIII, Lect. Notes in Math.}, \textbf{1583}, Springer, Berlin Heidelberg New York, p. 138-152.

    \bibitem[BeW94b]{BeW94b} J. Bertoin and W. Werner (1994). Compertement asymptotique du nombre de tours effectu\'{e}s par la trajectoire
        brownienne plane. \textit{S\'{e}minaire de Probabilit\'{e}s XXVIII, Lecture Notes in Math.}, \textbf{1583},
        Springer, Berlin Heidelberg New York, p. 164-171.

    \bibitem[BeW96]{BeW96} J. Bertoin and W. Werner (1996). Stable windings. \textit{Ann. Probab.}, \textbf{24}, p. 1269-1279.

    \bibitem[Bre68]{Bre68} L. Breiman (1968). A delicate law of the iterated logarithm for non-decreasing stable processes. \textit{Ann. Math. Statist.}, \textbf{39}, p. 1818-1824; correction \textbf{41}, p. 1126.

    \bibitem[Bur77]{Bur77} D. Burkholder (1977). Exit times of Brownian Motion, Harmonic Majorization and Hardy Spaces. \textit{ Adv. in Math.}, \textbf{26}, p. 182-205.

    \bibitem[CPP11]{CPP11} M. E. Caballero, J. C. Pardo, and J. L. P\'{e}rez (2011). Explicit identities for Lévy processes associated to symmetric stable processes. \textit{Bernoulli}, \textbf{17} (1), p. 34-59.

    \bibitem[Chy06]{Chy06} O. Chybiryakov (2006). The Lamperti correspondence extended to L\'{e}vy processes and semi-stable Markov processes in locally compact groups. \textit{Stochastic Processes and their Applications}, \textbf{116}, p. 857-872.

    \bibitem[DeB90]{DeB90} R.D. De Blassie (1990). The first exit time of a two-dimensional symmetric stable process from
        a wedge. \textit{Ann. Prob.}, \textbf{18}, p. 1034-1070.

    \bibitem[Don04]{Don04} R.A. Doney (2004). Small time behaviour of L\'{e}vy processes. \textit{Electron. J. Probab.}, \textbf{9}, p. 209-229.

    \bibitem[DoM00]{DoM00} R.A. Doney and  R.A. Maller (2000). Random walks crossing curved boundaries:a functional limit theorem, stability and asymptotic distributions for exit positions. \textit{Adv. Appl. Probab.}, \textbf{32}, p. 1117-1149.

    \bibitem[DoM02a]{DoM02a} R.A. Doney and R.A. Maller (2002). Stability of the overshoot for Le'vy processes. \textit{Ann. Prob.}, \textbf{30}, p. 188-212.

    \bibitem[DoM02b]{DoM02b} R.A. Doney and R.A. Maller (2002). Stability and Attraction to Normality for Le'vy processes at zero and infinity. \textit{J. Theoretical Probab.}, \textbf{15}, p. 751-792.

    \bibitem[Dur82]{Dur82} R. Durrett (1982). A new proof of Spitzer's result on the winding of 2-dimensional Brownian motion. \textit{ Ann. Prob.}, \textbf{10}, p. 244-246.

    \bibitem[Fri64]{Fri64} B.E. Fristedt (1964). The behavior of increasing stable processes for both small and large times.
        \textit{J. Math. Mech.}, \textbf{13}, p. 849-856.

    \bibitem[Fri67]{Fri67} B.E. Fristedt (1967). Sample function behavior of increasing processes with stationary, independent increments.
        \textit{Pacific J. Math.}, vol. \textbf{21}, n. 1, p. 21-33.

    \bibitem[GVA86]{GVA86} S.E. Graversen and J. Vuolle-Apiala (1986). $\alpha$-self-similar Markov Processes. \textit{Probab. Theory Related Fields}, \textbf{71}, p. 149-158.

    \bibitem[ItMK65]{ItMK65} K. It\^{o} and H.P. McKean (1965). Diffusion Processes and their Sample Paths. Springer, Berlin Heidelberg New York.

    \bibitem[JaS03]{JaS03} J. Jacod and A. N. Shiryaev (2003). Limit theorems for stochastic processes. 2nd ed., Springer-Verlag, Berlin.

    \bibitem[Khi39]{Khi39} A. Khintchine (1939). Sur la croissance locale des processus stochastiques homog\`{e}nes \`{a} accroissements
        ind\'{e}pendants. (Russian article and French resume) \textit{Akad. Nauk. SSSR, Izvestia, Ser. Math.}, \textbf{3} (5-6), p. 487-508.

    \bibitem[Kiu80]{Kiu80} S.W. Kiu (1980). Semi-stable Markov processes in $\mathbb{R}^{n}$. \textit{Stochastic Processes and their Applications},
        \textbf{10}, n. 2, p. 183-191.

    \bibitem[Kyp06]{Kyp06} A.E. Kyprianou (2006). Introductory Lectures on Fluctuations of L\'{e}vy Processes with Applications. Springer, Berlin.

    \bibitem[Lam72]{Lam72} J. Lamperti (1972). Semi-stable Markov processes I. \textit{Z. Wahr. Verw. Gebiete}, \textbf{22}, p. 205-225.

    \bibitem[LeG92]{LeG92} J.F. Le Gall (1992). Some properties of planar Brownian motion. Cours de l'\'{e}cole d'\'{e}t\'{e} de St-Flour XX. \textit{Lecture Notes in Math.}, Vol. \textbf{1527}. Springer, Berlin. p. 111-235.

    \bibitem[LGY86]{LGY86} J.F. Le Gall and M. Yor (1986). Etude asymptotique de certains mouvements browniens complexes avec drift .
        \textit{ Probab. Th. Rel. Fields}, \textbf{71} (2), p. 183-229.

    \bibitem[LGY87]{LGY87} J.F. Le Gall and M. Yor (1987). Etude asymptotique des enlacements du mouvement brownien autour des droites de l'espace. \textit{ Probab. Th. Rel. Fields}, \textbf{74} (4), p. 617-635.

    \bibitem[LiW11]{LiW11} M. Liao and L. Wang (2011). Isotropic self-similar Markov processes. \textit{Stochastic Processes and their Applications}, \textbf{121}, n. 9, p. 2064-2071.

    \bibitem[MeY82]{MeY82} P. Messulam and M. Yor (1982). On D. Williams' "pinching method" and some applications. \textit{ J. London Math. Soc.}, \textbf{26}, p. 348-364.

    \bibitem[PiY84]{PiY84} J.W. Pitman and M. Yor (1984). The asymptotic joint distribution of windings of planar Brownian motion. \textit{Bull. Amer. Math. Soc.}, \textbf{10}, p. 109-111.

    \bibitem[PiY86]{PiY86} J.W. Pitman and M. Yor (1986). Asymptotic Laws of planar Brownian Motion. \textit{ Ann. Prob.}, \textbf{14}, p. 733-779.

    \bibitem[PiY89]{PiY89} J.W. Pitman and M. Yor (1989). Further Asymptotic Laws of planar Brownian Motion. \textit{Ann. Prob.}, \textbf{17} (3), p. 965-1011.

    \bibitem[ReY99]{ReY99} D. Revuz and M. Yor (1999). Continuous Martingales and Brownian Motion. 3rd ed., Springer, Berlin.

    \bibitem[Shi94]{Shi94} Z. Shi (1994). Liminf behaviours of the windings and L\'{e}vy's stochastic areas of planar Brownian motion.
        \textit{S\'{e}minaire de Probabilit\'{e}s XXVIII, Lecture Notes in Math.}, \textbf{1583}, Springer, Berlin Heidelberg New York, p. 122-137.

    \bibitem[Shi98]{Shi98} Z. Shi (1998). Windings of Brownian motion and random walks in the plane. \textit{ Ann. Probab.},
        \textbf{26}, n. 1, p. 112-131.

    \bibitem[Sko91]{Sko91} A.V. Skorohod (1991). Random Processes with Independent Increments. Kluwer, Dordrecht.

    \bibitem[Spi58]{Spi58} F. Spitzer (1958). Some Theorems concerning two-dimensional Brownian Motion. \textit{ Trans. Amer. Math. Soc.}, \textbf{87}, p. 187-197.

    \bibitem[Vakth11]{Vakth11} S. Vakeroudis (2011). Nombres de tours de certains processus stochastiques plans
        et applications \`{a} la rotation d'un polym\`{e}re. (Windings of some planar Stochastic Processes
        and applications to the rotation of a polymer). PhD Dissertation, Universit\'{e} Pierre et Marie Curie (Paris VI), April 2011.

    \bibitem[Vak11]{Vak11} S. Vakeroudis (2012). On hitting times of the winding processes of planar Brownian motion and of Ornstein-Uhlenbeck processes, via Bougerol's identity. \textit{SIAM Theory of Probability and its Applications,} Vol. \textbf{56} (3), pp. 485-507  (originally published in 2011 in \textit{Teor. Veroyatnost. i Primenen.,} Vol. \textbf{56} (3), pp. 566-591).

    \bibitem[VaY12]{VaY11a} S. Vakeroudis and M. Yor (2012). Integrability properties and Limit Theorems for the exit time from a cone of planar Brownian motion. To appear in \textit{Bernoulli}. ArXiv:1201.2716.

    \bibitem[Whi80]{Whi80} W. Whitt (1980). Some useful functions for functional limit Theorems. \textit{Math. Operat. Res.}, \textbf{5}, p. 67--85.

    \bibitem[Wil74]{Wil74} D. Williams (1974). A simple geometric proof of Spitzer's winding number formula for 2-dimensional Brownian motion. University College, Swansea. Unpublished.

    \bibitem[Yor80]{Yor80} M. Yor (1980). Loi de l'indice du lacet Brownien et Distribution de Hartman-Watson. \textit{ Z. Wahrsch. verw. Gebiete}, \textbf{53}, p. 71-95.

    \bibitem[Yor97]{Yor97} M. Yor (1997). Generalized meanders as limits of weighted Bessel processes, and an elementary proof of Spitzer's asymptotic result on Brownian windings. \textit{Studia Scient. Math. Hung.}, \textbf{33}, p. 339-343.

\end{thebibliography}
\end{document}